\newtheorem{thm}{Theorem}[section]
\newtheorem{coro}[thm]{Corollary}
\newtheorem{lem}[thm]{Lemma}
\newtheorem{prop}[thm]{Proposition}
\theoremstyle{definition}
\newtheorem{eg}[thm]{Example}
\newtheorem{hyp}[thm]{Hypothesis}
\theoremstyle{remark}
\newtheorem{remk}[thm]{Remark}
\newcommand{\Rbb}{ {\mathbb R}}
\newcommand{\Cbb}{ {\mathbb C}}
\newcommand{\Ig}{I_{\gamma}}
\newcommand{\Igplus}{I_{\gamma+1}}
\newcommand{\Igk}{I_{\gamma_k}}
\newcommand{\Pg}{P_{\gamma}}
\newcommand{\Pgk}{P_{\gamma_k}}
\newcommand{\cj}[1]{\textcolor{blue}{(#1)}}
\title{Hot spots on cones and warped product manifolds}
\author{Lawford Hatcher}
\begin{document}

\begin{abstract}
    We study extrema of solutions to the heat equation (i.e. hot spots) on a class of warped product manifolds of the form $([0,L]\times M,dr^2+f(r)^2h)$ where $(M,h)$ is a closed Riemannian manifold. We prove that, under certain conditions on the warping function $f$, the statement of Rauch's hot spots conjecture holds for the corresponding warped product. We then go on to study the long-time behavior of hot spots on infinite cones over closed Riemannian manifolds. In this case, under appropriate hypotheses on the initial condition, there are four possible long-time behaviors depending only on the spectral gap of the fiber $(M,h)$. 
\end{abstract}

\maketitle

\section{Introduction}\label{intro}

Let $(M,h)$ be a connected, smooth, closed $(n-1)$-dimensional Riemannian manifold with $n\geq 2$. Let $I=[0,L]$ be a compact interval, and let $f:I\to (0,\infty)$ be a smooth function. Define a manifold with boundary by $\Omega:=I\times M$. We use the variable $r$ for points in $I$ and $x$ for points in $M$. Endow $\Omega$ with the Riemannian metric 
\begin{equation}\label{wmetric}
    g(r,x):=dr^2+f(r)^2h(x).
\end{equation} The resulting Riemannian manifold with boundary $(\Omega,g)$ is an example of a warped product manifold. We refer to the function $f$ as the \textit{warping function} of $(\Omega,g)$. Let \[0=\mu_1<\mu_2\leq \mu_3\leq \dots\] denote the eigenvalues of the Neumann Laplace-Beltrami operator on $(\Omega,g)$. The first goal of this paper is to describe the set of local extrema of the eigenfunctions corresponding to $\mu_2$.
\begin{thm}\label{compactneumannthm}
    Let $u$ denote a second Neumann eigenfunction of the Laplace-Beltrami operator on $(\Omega,g)$. If either of the following conditions holds, then every local extremum of $u$ is contained in $\partial\Omega$:
    \begin{enumerate}
        \item\label{thmcase1} $u$ is radial (i.e. depends only on the variable $r$).
        \item\label{thmcase2} The warping function $f$ is non-constant, (weakly) monotonic, and $\mu_2$ is a simple eigenvalue (i.e. $\mu_2<\mu_3$).
    \end{enumerate}
\end{thm}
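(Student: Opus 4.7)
My plan is to reduce the eigenvalue problem on $\Omega$ to a family of one-dimensional Sturm--Liouville problems via separation of variables. Writing
\begin{equation*}
    \Delta_g u = \partial_r^2 u + (n-1)\frac{f'}{f}\,\partial_r u + \frac{1}{f^2}\,\Delta_h u,
\end{equation*}
one sees that the Neumann spectrum of $\Delta_g$ decomposes as $\{\mu_{i,k}\}$, where $\mu_{i,k}$ is the $i$-th eigenvalue of the weighted problem
\begin{equation*}
    -v''-(n-1)\frac{f'}{f}\,v'+\frac{\lambda_k}{f^2}\,v=\mu\,v,\qquad v'(0)=v'(L)=0,
\end{equation*}
and $\lambda_k$ denotes the $k$-th Neumann eigenvalue of $(M,h)$ (so $\lambda_1=0$). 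When $\mu_2$ is simple, the second eigenspace is spanned by a single separable eigenfunction $u(r,x)=v(r)\phi(x)$, and the argument splits according to whether $\phi$ is constant or corresponds to $\lambda_2>0$. Case (1) is exactly the radial subcase and does not require the simplicity hypothesis.

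For the radial subcase (which covers (1)), $v$ is the second eigenfunction of the $\lambda=0$ Sturm--Liouville problem, so by Sturm oscillation it has exactly one interior zero $r_0$. At any interior critical point $r^*$ the ODE collapses to $v''(r^*)=-\mu_2 v(r^*)$, so a critical point with $v>0$ is a strict local maximum and one with $v<0$ is a strict local minimum. The Neumann condition already makes the endpoints critical, with the sign of $v''$ there forced by the ODE; any additional interior critical point on either side of $r_0$ would require an intermediate local minimum with $v>0$ (or maximum with $v<0$), contradicting the sign of $v''$. Hence $v$ is strictly monotone on $[0,r_0]$ and on $[r_0,L]$, and every critical point of $u(r,x)=v(r)$ lies in $\partial\Omega$.

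For the non-radial subcase of (2), the factor $v>0$ is the \emph{first} eigenfunction of the Sturm--Liouville problem with potential $\lambda_2/f^2$. The key step is to exploit monotonicity of $f$ through the integrating factor
\begin{equation*}
    F(r):=f(r)^{n-1}v'(r),\qquad F(0)=F(L)=0,
\end{equation*}
which, by a direct computation from the ODE, satisfies
\begin{equation*}
    F'(r)=f(r)^{n-3}\bigl(\lambda_2-\mu_2 f(r)^2\bigr)\,v(r).
\end{equation*}
When $f$ is monotone, so is $\lambda_2-\mu_2 f^2$; the identity $\int_0^L F'\,dr=0$, paired with the positive weight $f^{n-3}v$, then forces this monotone factor to change sign exactly once in $(0,L)$. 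Consequently $F$ is unimodal and of strict constant sign on $(0,L)$, so $v'(r)\neq 0$ for $r\in(0,L)$. Every critical point of $u=v\phi$ must therefore have $r\in\{0,L\}$ or $\phi(x)=0$.

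The main obstacle I anticipate is excluding critical points on the nodal set of $\phi$. At a candidate interior local extremum $(r_0,x_0)$ with $\phi(x_0)=0$, the positivity of $v$ implies that the sign of $u$ near $(r_0,x_0)$ matches the sign of $\phi$ near $x_0$, forcing $\phi$ to attain a local extremum of value $0$ at $x_0$. The equation $-\Delta_h\phi=\lambda_2\phi$ makes $\phi$ subharmonic on any neighborhood where $\phi\leq 0$ (and superharmonic where $\phi\geq 0$), so the strong maximum principle applied on the side of constant sign around $x_0$ forces $\phi\equiv 0$ in a neighborhood, contradicting unique continuation. Hence no such candidate is a genuine local extremum, and every local extremum of $u$ lies in $\partial\Omega$.
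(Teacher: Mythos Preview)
Your proposal is correct, and for the non-radial part of Case~(2) it is essentially the paper's argument: your integrating factor $F=f^{n-1}v'$ satisfies $F'=f^{n-3}(\lambda_2-\mu_2 f^2)v$, which is precisely the paper's auxiliary function $a(r)$, and your sign-change/unimodality reasoning is the content of the paper's Lemmas~\ref{ahasonezero} and~\ref{almostneumannthm}.

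Where you differ is in the radial Case~(1). The paper treats this case with the \emph{same} integral machinery (taking $\nu=0$ in Lemma~\ref{ahasonezero}, so that $a=-\mu f^{n-1}w$ inherits its single sign change from the unique zero of $w$, and then Lemma~\ref{almostneumannthm} gives $w'\neq 0$ on $(0,L)$). You instead use a direct second-derivative argument: at any critical point the ODE reduces to $v''=-\mu_2 v$, so critical points with $v>0$ are strict maxima and those with $v<0$ are strict minima, which is incompatible with having an interior critical point between an endpoint and the nodal zero $r_0$. Your route is a bit more elementary in that it avoids the auxiliary function altogether for this case; the paper's route has the advantage of a single unified lemma covering both the radial and non-radial situations. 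Both are short.

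You also make explicit, via the strong maximum principle and unique continuation for $\phi$, why a nodal critical point of $u=v\phi$ cannot be a local extremum; the paper records this consequence in the remark after the theorem but does not spell out the argument.
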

\begin{remk}
    In Case (\ref{thmcase1}) of Theorem \ref{compactneumannthm}, we in fact prove the stronger statement that $u$ has no interior critical points. A point at which an eigenfunction and its gradient both vanish is known as a \textit{nodal critical point}. In Case (\ref{thmcase2}) of Theorem \ref{compactneumannthm}, we prove that if $u$ has an interior critical point, then it is a nodal critical point. In this same case, if $u$ is not radial, then it has a nodal critical point if and only if the fiber $(M,h)$ has a second eigenfunction with a nodal critical point. A celebrated result of Uhlenbeck \cite{uhlenbeck} states that, for a generically chosen metric $h$ on $M$, the pair $(M,h)$ has no eigenfunction with a nodal critical point.
\end{remk}
\indent Theorem \ref{compactneumannthm} states that Rauch's hot spots conjecture holds when $(\Omega,g)$ satisfies either (\ref{thmcase1}) or (\ref{thmcase2}). The original form of this conjecture states that on a bounded planar domain with generically chosen initial conditions,\footnote{The precise genericity condition is that the initial condition is not $L^2$-orthogonal to the second Neumann eigenspace.} the extrema (i.e. hot spots) of solutions to the Neumann heat equation tend toward the boundary of the domain as time tends to infinity. Since these extrema tend toward extrema of some second Neumann eigenfunction (see the exposition in \cite{rauch, banuelosburdzy}), Theorem \ref{compactneumannthm} implies this conjecture for these manifolds.\\
\indent Since the conjecture was made, the topic of hot spots has been explored on Riemannian manifolds and unbounded domains. See, e.g., \cite{chavelkarp, jimbo, freitas, ishige, kre, me4}. Of course, in this broader context, the statement of the hot spots conjecture often fails to hold (see Freitas's result \cite{freitas}) and does not even make sense for manifolds without boundary. \\
\indent In Section \ref{counterex} below, we show that the hypotheses that the warping function $f$ be non-constant and monotonic in Case (\ref{thmcase2}) of Theorem \ref{compactneumannthm} are both necessary via counterexamples. We note that a result of Kawohl \cite{kawohl} (see also the discussion in \cite{banuelosburdzy}) proves that a weakened version of the hot spots conjecture holds for Euclidean cylinders, which is just the case of a constant warping function. However, what Kawohl proves is that $u$ attains its extreme values on the boundary and not that it attains them \textit{only} on the boundary.\\
\indent It is unclear whether the simplicity hypothesis in Case (\ref{thmcase2}) of Theorem \ref{compactneumannthm} is necessary. As we describe in Example \ref{simplicitycounterex} below, there are two means by which the warped product $(\Omega,g)$ could have a multiple second Neumann eigenvalue. In one of these cases, we show that a second Neumann eigenfunction has no interior hot spots. In the other case, we construct second Neumann eigenfunctions with interior (non-nodal) critical points. However, it is currently unclear to us whether these critical points can be hot spots. We conjecture that they cannot. \\
\indent In recent years, researchers have considered analogous questions with mixed Dirichlet-Neumann eigenfunctions. See, e.g., \cite{me1}, \cite{liyao}, \cite{aldeghirohleder}, \cite{me2}, \cite{me3}, \cite{kennedyrohleder}. In this paper, we also consider the mixed eigenvalue problem in which Dirichlet boundary conditions are imposed on $\{0\}\times M$ and Neumann boundary conditions are imposed on $\{L\}\times M$. The first eigenvalue $\lambda_1$ of the Laplace-Beltrami operator on $\Omega$ with these boundary conditions is strictly positive, simple, and has non-constant corresponding eigenfunctions. A solution to the heat equation with these same boundary conditions and with a generically chosen initial condition has extrema tending toward the extrema of such an eigenfunction. In this case, the hypotheses are significantly simpler. 
\begin{thm}\label{compactmixedthm}
    Let $u$ denote a first eigenfunction of the Laplace-Beltrami operator on $(\Omega,g)$ with the boundary conditions described above. Then the critical points (and, therefore, extrema) of $u$ are contained in $\{L\}\times M$.
\end{thm}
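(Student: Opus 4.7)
The plan is to reduce the problem to a radial ODE by separating variables. I would expand an eigenfunction $u$ on $(\Omega,g)$ in an $L^2(M,h)$-orthonormal basis $\{\phi_k\}$ of Laplace eigenfunctions with eigenvalues $0=\nu_0<\nu_1\leq\nu_2\leq\cdots$ and write $u(r,x)=\sum_k \psi_k(r)\phi_k(x)$. The warped-product form of the Laplacian shows that each coefficient $\psi_k$ satisfies the one-dimensional mixed problem
\[-(f^{n-1}\psi_k')'+\nu_k f^{n-3}\psi_k=\lambda f^{n-1}\psi_k,\qquad \psi_k(0)=0,\ \psi_k'(L)=0.\]
Let $\lambda_{1,k}$ denote the smallest eigenvalue of the $k$-th such problem. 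The variational characterization makes $\lambda_{1,k}$ strictly increasing in $\nu_k$, so $\lambda_{1,0}<\lambda_{1,k}$ for every $k\geq 1$. I would conclude that $\lambda_1=\lambda_{1,0}$ is simple and that its (essentially unique) eigenfunction is radial: $u(r,x)=\psi(r)$ for some $\psi$ solving the $k=0$ problem.

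Because $u$ depends only on $r$, the gradient is $\nabla u=\psi'(r)\partial_r$, and the critical set of $u$ is exactly $\{r\in[0,L]:\psi'(r)=0\}\times M$. It therefore suffices to show that $\psi'$ vanishes only at $r=L$. As a ground-state Sturm-Liouville eigenfunction satisfying $\psi(0)=0$, the function $\psi$ has no interior zero and may be chosen positive on $(0,L]$. I would then integrate the ODE $(f^{n-1}\psi')'=-\lambda_1 f^{n-1}\psi$ from any $r\in[0,L)$ up to $L$ and invoke the Neumann condition $\psi'(L)=0$ to obtain
\[f(r)^{n-1}\psi'(r)=\lambda_1\int_r^L f(s)^{n-1}\psi(s)\,ds,\]
whose right-hand side is strictly positive on $[0,L)$. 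Hence $\psi'>0$ on $[0,L)$ and $\psi'(L)=0$, which is the desired conclusion.

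I do not anticipate any serious obstacle; the one step requiring care is the reduction to the radial case, which rests on the strict monotonicity of the ground-state eigenvalue of a Sturm-Liouville operator in its potential. It is worth noting that the argument uses the mixed boundary conditions essentially: the Dirichlet condition at $r=0$ forces the ground state to be radial and positive on $(0,L]$, while the Neumann condition at $r=L$ is what makes the boundary term in the integration vanish and thereby pins down the sign of $f^{n-1}\psi'$. The analogous strategy breaks down for the pure Neumann problem of Theorem \ref{compactneumannthm}, where the radial derivative of a second eigenfunction need not have a definite sign.
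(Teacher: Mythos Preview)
Your proposal is correct and follows essentially the same route as the paper: reduce to a radial ground state via separation of variables and strict monotonicity of the lowest Sturm--Liouville eigenvalue in the potential (this is the content of the paper's Lemma~\ref{decinnu} and Corollary~\ref{secondneumann}), then integrate $(f^{n-1}\psi')'=-\lambda_1 f^{n-1}\psi$ to pin down the sign of $\psi'$. The one difference is that you integrate from $r$ to $L$ and invoke $\psi'(L)=0$ directly, whereas the paper integrates from $0$ to $r$, uses the Hopf lemma to get $\psi'(0)>0$, and then argues that $f^{n-1}\psi'$ is strictly decreasing; your choice is a minor but pleasant simplification since it bypasses the Hopf lemma entirely.
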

We next turn our attention to the study of hot spots on a non-compact metric space known as the \textit{cone over} $(M,h)$, which is the metric completion of a certain singular warped product over $M$. In this case, we let $I$ equal the non-compact interval $(0,\infty)$, and we take $f(r)=r$ as the warping function to define the metric $g(r,x)$ as in (\ref{wmetric}). The metric completion of $(\Omega,g)$ with respect to $g$ is isometric to the union of $(\Omega,g)$ with the \textit{cone point}, which we call $p$. We denote this metric completion by $C(M)$ and refer to it as the cone over $(M,h)$. Let $d(\cdot,\cdot)$ denote the extension of the Riemannian distance on $(0,\infty)\times M$ to $C(M)$. One can define a self-adjoint Laplacian $\Delta_g$ on $C(M)$ via the Friedrichs extension (see \cite{cheeger, mooers}), and this Laplacian gives rise to a heat equation on $C(M)$:
\begin{equation}\label{heateqn}
    \begin{cases}
        \partial_t u=\Delta_gu\\
        u(r,x,0)=\phi(r,x),
    \end{cases}    
\end{equation}

Since the natural embedding of the Sobolev space $H^1(C(M))$ into $L^2(C(M))$ is not compact, the Laplacian $\Delta_g$ does not have discrete spectrum, and the hot spots of the solutions of the heat equation cannot be studied using eigenfunctions. We instead study the hot spots by analyzing the heat kernel on $C(M)$. Given a solution $u$ of the heat equation (\ref{heateqn}) on $C(M)$ and for each $t>0$, let \[H(t):=\Big\{(r,x)\in C(M)\mid u(r,x,t)=\max_{(s,y)}u(s,y,t)\Big\}.\]
We determine the long-time location of the set $H(t)$ under the following two assumptions on the initial condition $\phi$:
\begin{hyp}\label{phihyp}
    Suppose that the initial condition $\phi\in L^2(C(M))$ is compactly supported and $\displaystyle\int_{C(M)}\phi dV_g>0$.
\end{hyp}
\indent Let $V$ denote the set of functions satisfying Hypothesis \ref{phihyp} endowed with the subspace topology inherited from $L^2(C(M))$. In Theorem \ref{mainthmcones} below, we make an assumption (namely, Hypothesis \ref{hyp2} below) that holds in an open dense subset of $V$. 
\begin{thm}\label{mainthmcones}
    Let $(M,h)$ be a closed Riemannian manifold of dimension $n-1\geq 1$. Let $\nu_2$ be the second eigenvalue of the Laplace-Beltrami operator on $(M,h)$. If $u$ solves the heat equation (\ref{heateqn}) with initial condition $\phi$ satisfying Hypothesis \ref{phihyp} and Hypothesis \ref{hyp2} below, then
    \begin{enumerate}
        \item If $\nu_2\geq 2n$, then there exists $T>0$ such that for all $t\geq T$, \[H(t)=\{p\}.\]
        \item\label{thm3case2} If $n-1<\nu_2<2n$, then  \[\lim_{t\to\infty}\sup\{r\mid (r,x)\in H(t)\}=0.\]
        \item\label{thm3case3} If $\nu_2=n-1$, then there exists a set $H_{\infty}(\phi)\subseteq C(M)$, defined below, such that \[\lim_{t\to\infty}\sup_{a\in H(t)}d(a,H_{\infty}(\phi))=0.\]
        \item\label{thm3case4} If $\nu_2<n-1$, then  \[\lim_{t\to\infty}\inf\{r\mid (r,x)\in H(t)\}=\infty.\]
    \end{enumerate}
\end{thm}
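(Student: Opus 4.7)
The approach is to exploit the explicit Bessel-type expansion of the heat kernel on $C(M)$ arising from separation of variables. Let $\{\psi_k\}$ be an $L^2(M)$-orthonormal basis of $\Delta_h$-eigenfunctions with eigenvalues $0 = \nu_1 < \nu_2 \leq \nu_3 \leq \cdots$. Then (following Cheeger's analysis of cones)
\[K\big((r,x),(s,y),t\big) = \sum_{k \geq 1} p_k(r,s,t)\,\psi_k(x)\,\psi_k(y),\qquad p_k(r,s,t) = \frac{(rs)^{-(n-2)/2}}{2t}\, I_{\alpha_k}\!\left(\frac{rs}{2t}\right) e^{-(r^2+s^2)/(4t)},\]
with $\alpha_k := \sqrt{\nu_k + ((n-2)/2)^2}$ and $\beta_k := \alpha_k - (n-2)/2$ (so $\beta_1 = 0$). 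Expanding $\phi = \sum_k \phi_k(r)\psi_k(x)$ and $u = \sum_k v_k(r,t)\psi_k(x)$, and applying the small-argument asymptotic $I_\alpha(z) \sim (z/2)^\alpha/\Gamma(\alpha+1)$, for compactly supported $\phi$ and $t \to \infty$ I get
\[v_k(r,t) = A_k\, r^{\beta_k}\, t^{-\alpha_k-1}\, e^{-r^2/(4t)}\,(1+o(1)),\]
where $A_k = A_k(\phi)$ is an explicit moment integral of $\phi_k$. Hypothesis \ref{phihyp} forces $A_1 > 0$, while the genericity Hypothesis \ref{hyp2} will impose that the $\nu_2$-angular coefficient
\[G(x) := \sum_{k:\, \nu_k = \nu_2}(A_k/A_1)\,\psi_k(x)\]
is non-trivial (plus a non-degeneracy of its positive maximum for Case (\ref{thm3case3})). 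Since $\int_M G = 0$ and $G \not\equiv 0$, automatically $\max_M G > 0$.

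The key algebraic identity is $-\alpha_k - 1 + n/2 = -\beta_k$, which collapses $v_k/v_1$ to $(A_k/A_1)(r/t)^{\beta_k}$ and yields
\[u(r,x,t) \;\sim\; A_1\, t^{-n/2}\, e^{-r^2/(4t)}\!\left[1 + G(x)\,(r/t)^{\beta_2} + o\!\big(t^{-\beta_2/2}\big)\right]\]
uniformly on the scale where the hot spots concentrate. Setting $\partial_r u = 0$ and keeping leading terms gives the critical-point equation
\[r^{2 - \beta_2} \;\approx\; 2\beta_2\, G(x)\, t^{1 - \beta_2},\]
whose solution $r(x,t) \sim (2\beta_2 G(x))^{1/(2-\beta_2)}\, t^{(1-\beta_2)/(2-\beta_2)}$ drives all four cases, using the elementary equivalences $\beta_2 \geq 2 \iff \nu_2 \geq 2n$ and $\beta_2 = 1 \iff \nu_2 = n-1$. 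In Case (1) the exponent $(1 - \beta_2)/(2 - \beta_2)$ is non-positive and the equation admits no admissible solution with $r = o(\sqrt{t})$; a direct second-derivative check at $r = 0$ (confirming that the quadratic-in-$r$ contributions from modes $k \geq 2$ are dominated by $-A_1 t^{-n/2-1} r^2/4$) then yields $H(t) = \{p\}$ for $t$ large. In Case (\ref{thm3case2}) the exponent is negative, so $r \to 0$; in Case (\ref{thm3case3}) the exponent is zero and $r(x,t) \to 2G(x)$, so $H_\infty$ is essentially $\{(2G(x^*), x^*) : G(x^*) = \max_M G\}$; in Case (\ref{thm3case4}) the exponent is positive and $r \to \infty$. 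Maximizing in $x$ in the last three cases selects the positive maximizers of $G$.

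The main technical hurdle is passing from pointwise asymptotics for each individual $v_k$ to a uniform asymptotic for $u$, which is what is actually needed to control $\arg\max_{(r,x)}u(\cdot,\cdot,t)$. This requires (i) a quantitative remainder in the Bessel small-argument expansion, uniform on $rs/(2t) \leq c$; (ii) Gaussian tail estimates on $p_k$ ensuring that the region $r \gtrsim \sqrt{t\log t}$ is negligible for locating the maximum; and (iii) a uniform bound on the sum over $k \geq 3$, which uses Weyl's law growth $\alpha_k \sim \sqrt{\nu_k}$ together with the natural suppression factor $(r/t)^{\beta_k - \beta_2}$ for higher modes. A secondary task is formulating $H_\infty$ and the exact non-degeneracy needed in Case (\ref{thm3case3}), where Hypothesis \ref{hyp2} must rule out pathologies in the maximum set of $G$. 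Once this uniform control is in hand, the theorem reduces to the critical-point analysis sketched above.
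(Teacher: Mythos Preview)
Your proposal is correct and follows essentially the same route as the paper: the paper likewise expands the heat kernel via Cheeger's Bessel formula, derives the asymptotic $w_{\gamma_k}(r,t)\sim C_k\, r^{\gamma_k-n/2+1}\, t^{-(\gamma_k+1)}\, e^{-r^2/(4t)}$ on sets $\{r\le R\sqrt t\}$, handles the tail of the $k$-sum via Weyl's law and $L^\infty$ eigenfunction bounds, uses a Gaussian upper bound to rule out $r\gtrsim\sqrt t$, and then locates $H(t)$ by analyzing the sign of $\partial_r u$ exactly as in your critical-point balance (your exponent $(1-\beta_2)/(2-\beta_2)$ is the paper's $\alpha=(n/2-\gamma_2)/(n/2-\gamma_2+1)$). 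Two minor corrections: the paper's genericity hypothesis is only that $G\not\equiv 0$, with no non-degeneracy of its maximum needed for Case~(\ref{thm3case3}); and in Case~(1) with $\beta_2>2$ the exponent $(1-\beta_2)/(2-\beta_2)$ is actually positive, so the point is rather that any formal critical radius lies outside the regime $r=o(\sqrt t)$ where the expansion is valid, and the paper simply shows $\partial_r u<0$ throughout $(0,R\sqrt t\,]$ directly.
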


\begin{remk}
    In Cases (\ref{thm3case2}), (\ref{thm3case3}), and (\ref{thm3case4}) of Theorem \ref{mainthmcones}, we in fact provide significantly more precise information on the location of $H(t)$ for large times. In fact, Theorem \ref{mainthmcones} follows immediately from the more detailed Theorem \ref{moreprecise} in Section \ref{mainproofsection} below. In particular, one can see in Theorem \ref{moreprecise} that Case (\ref{thm3case2}) in Theorem \ref{mainthmcones} is sharp in that the hot spots never quite reach the cone point as $t\to\infty$.
\end{remk}

\indent We now define the set $H_{\infty}(\phi)$ in Case (\ref{thm3case3}) of Theorem \ref{mainthmcones} and state Hypothesis \ref{hyp2} along the way. Denote the remaining eigenvalues of the Laplace-Beltrami operator on the fiber $(M,h)$ by $0=\nu_1<\nu_2\leq \nu_3\leq \dots$, and let $\{v_k(x)\}$ denote a corresponding orthonormal basis of eigenfunctions. For each $k\geq 1$, let \[\gamma_k:=\sqrt{\frac14(n-2)^2+\nu_k}.\] Suppose that \[\nu_2=...=\nu_{K-1}<\nu_{K}\] for some $K\geq 3$. Let 
\begin{equation}
    F(x):=\sum_{k=2}^{K-1}\Big(\int_{C(M)}s^{\gamma_k-n/2+1}\cdot \phi(s,y)\cdot v_k(y)dV_g(s,y)\Big)v_k(x)\in L^2(M,h).
\end{equation}
The final hypothesis in Theorem \ref{mainthmcones} is
\begin{hyp}\label{hyp2}
    Suppose that the function $F(x)$ is not identically equal to $0$. 
\end{hyp}
Let $A_{\infty}$ be the set of points in $M$ at which $F$ takes its maximum value. Define \[r_{\infty}:=\frac1n\cdot\text{Vol}(M,h)\cdot\frac{\displaystyle\max_{x\in M}F(x)}{\displaystyle\int_{C(M)}\phi(s,y)dV_g(s,y)}.\] Then the set $H_{\infty}(\phi)$ is defined by \begin{equation}
    H_{\infty}(\phi):=\begin{cases}
        \{r_{\infty}\}\times A_{\infty}\;\;&\text{if}\;\;r_{\infty}>0\\\\
        \{p\}&\text{if}\;\; r_{\infty}=0.
    \end{cases}
\end{equation}

\begin{remk}
    The Euclidean space $\Rbb^n$ is isometric to the cone over the unit $(n-1)$-sphere. On $\Rbb^n$, Chavel and Karp prove in \cite{chavelkarp} that, given a non-negative compactly supported initial condition, the hot spots tend toward the center of mass of the initial condition. The second eigenvalue of the unit $(n-1)$-sphere is equal to $n-1$, and the next eigenvalue not equal to $n-1$ is equal to $2n$. Assuming that Hypothesis \ref{hyp2} holds, one can check that Theorem \ref{mainthmcones} implies Chavel and Karp's result. In fact, using Lemma \ref{gaussianest}, Lemma \ref{wderivative}, and Proposition \ref{tailderivative} below, Hypothesis \ref{hyp2} can even be removed in this case. 
\end{remk}
\begin{remk}
    The proof of Theorem \ref{mainthmcones} is inspired by Ishige's proof of his main result in \cite{ishige}, in which he determines the location of hot spots on $\Rbb^n$ minus the unit ball with Neumann boundary conditions. In his paper, Ishige implicitly makes use of a warped product decomposition of this space and analyzes the functions obtained by separation of variables. This warped product is the same as the cone over the unit sphere minus the unit ball. Based on Ishige's work, we conjecture that a statement similar to Theorem \ref{mainthmcones} holds for manifolds with boundary of the form $([1,\infty)\times M,dr^2+r^2h)$ with Neumann boundary conditions on $\{1\}\times M$. 
\end{remk}

We now outline the remainder of the paper. In Section \ref{prelim}, we review some basic facts about the geometry and spectrum of $(\Omega,g)$ with respect to the warping function $f$ and fiber $(M,h)$. Using these facts, we prove Theorems \ref{compactneumannthm} and \ref{compactmixedthm} in Section \ref{compactproofs}. Following these proofs, we show that the hypotheses in Theorem \ref{compactneumannthm} are necessary via some counterexamples in Section \ref{counterex}. We then begin our study of hot spots on cones. We begin with Section \ref{besselsection}, in which we recall some facts about modified Bessel functions. These functions are a key ingredient in the explicit heat kernel for $C(M)$, which we describe in Section \ref{heatkernelsection}. In Sections \ref{limitingsection} and \ref{tailsection}, we analyze different parts of the expression for the heat kernel to understand the long-term behavior of solutions of the heat equation. We then combine these analyses to prove Theorem \ref{mainthmcones} in Section \ref{mainproofsection}.

\section{Preliminaries on warped product manifolds}\label{prelim}

Here we review some facts about the geometry and spectra of warped product manifolds. Let $(M,h)$, $f:I\to (0,\infty)$, and $(\Omega,g)$ be as described in Section \ref{intro}. Throughout the paper we let $x$ denote a point in $M$, and we let $dx$ denote the volume density on $(M,h)$. A simple computation shows that the volume density on $(\Omega,g)$ is given by \[dV_g(r,x)=f(r)^{n-1}drdx.\] Moreover, if $\Delta_h$ is the Laplace-Beltrami operator on $(M,h)$, then the Laplace-Beltrami operator on $(\Omega,g)$ is given by \[\Delta_g=\partial_r^2+(n-1)\cdot\frac{f'(r)}{f(r)}\cdot \partial_r+\frac{1}{f(r)^2}\cdot \Delta_h.\]

\indent The structure of $\Omega$ allows us to perform separation of variables on the eigenvalue problems on $(\Omega,g)$. Let $\nu_k$ denote the eigenvalues of $(M,h)$ and $v_k(x)$ an orthonormal basis of corresponding eigenfunctions. Then the Neumann eigenvalues of $(\Omega,g)$ come in a two-parameter family $\mu_{j,k}$ with eigenfunctions $u_{j,k}(r,x)=w_{j,k}(r)v_k(x)$ where $j$ and $k$ each run over the positive integers. The functions $w_{j,k}$ satisfy the ordinary differential equations 
\begin{equation}\label{radialevals}
    \begin{cases}
    w_{j,k}''(r)+(n-1)\cdot\displaystyle\frac{f'(r)}{f(r)}\cdot w_{j,k}'(r)-\displaystyle\frac{\nu_k}{f(r)^2}\cdot w_{j,k}(r)=-\mu_{j,k}w_{j,k}(r)\\ \\
    w_{j,k}'(0)=w_{j,k}'(L)=0.
\end{cases}
\end{equation}
The eigenvalues of the mixed Dirichlet-Neumann problem mentioned in Section \ref{intro} can also be treated via separation of variables. Abusing notation slightly, we use the same variables for the eigenvalues and eigenfunctions of the mixed problem as we do for the Neumann problem, and the top line of Equation (\ref{radialevals}) also holds for the radial components of the corresponding eigenfunctions. The only difference in the bottom line is that we impose the boundary condition $w_{j,k}(0)=0$ rather than $w_{j,k}'(0)=0$.\\
\indent It will be useful in the following sections to understand the relationship between the $\nu_k$ and the $\mu_{j,k}$. In what follows, we suppress the subscript $k$ from the notation for both $\nu$ and $\mu$.

\begin{lem}\label{decinnu}
    For either the Neumann or mixed problem, the eigenvalues $\mu_j$ of (\ref{radialevals}) are each increasing in $\nu$. For the Neumann problem, as $\nu$ decreases to $0$, $\mu_1$ approaches $0$. 
\end{lem}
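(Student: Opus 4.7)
The plan is to rewrite the ODE in (\ref{radialevals}) in self-adjoint (Sturm--Liouville) form and then invoke the min--max characterization of eigenvalues. Multiplying the ODE by $f(r)^{n-1}$, the first two terms combine as $(f^{n-1}w')'$, and we obtain
\[
-\bigl(f(r)^{n-1}w'(r)\bigr)' + \nu\, f(r)^{n-3}\, w(r) \;=\; \mu\, f(r)^{n-1}\, w(r).
\]
This is a regular Sturm--Liouville problem on $[0,L]$ with weight $f^{n-1}$, potential $\nu f^{n-3}$, and either Neumann conditions $w'(0)=w'(L)=0$ or mixed conditions $w(0)=0,\;w'(L)=0$. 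Let $\mathcal{H}$ denote the corresponding Sobolev space (i.e. $H^1([0,L])$ in the Neumann case, and the subspace with $w(0)=0$ in the mixed case). Then
\[
\mu_j(\nu) \;=\; \inf_{\substack{V\subseteq \mathcal{H}\\ \dim V = j}}\;\sup_{w\in V\setminus\{0\}}\;
\frac{\int_0^L \bigl[f^{n-1}(w')^2 + \nu\, f^{n-3}\, w^2\bigr]\,dr}{\int_0^L f^{n-1}\, w^2\,dr}.
\]

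For the first claim, I would note that since $f>0$, the integrand $\nu f^{n-3} w^2$ in the numerator is pointwise strictly increasing in $\nu$ for any $w\not\equiv 0$. Hence the Rayleigh quotient is strictly increasing in $\nu$ for each nonzero test function, and the min--max formula immediately gives that each $\mu_j(\nu)$ is nondecreasing in $\nu$. Strict monotonicity follows from the standard Hellmann--Feynman formula: if $w_j$ is an $L^2(f^{n-1}dr)$-normalized eigenfunction for $\mu_j(\nu)$, then
\[
\frac{d\mu_j}{d\nu} \;=\; \int_0^L f(r)^{n-3}\, w_j(r)^2\, dr \;>\;0,
\]
since $w_j$ does not vanish identically.

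For the second claim, in the Neumann case at $\nu = 0$ the constant function $w\equiv 1$ is an eigenfunction with eigenvalue $0$, so $\mu_1(0)=0$; together with monotonicity this gives $\mu_1(\nu)\geq 0$ for all $\nu\geq 0$. For the matching upper bound, I would plug $w\equiv 1$ as a test function in the Rayleigh quotient to get
\[
0 \;\leq\; \mu_1(\nu) \;\leq\; \nu\cdot \frac{\displaystyle\int_0^L f(r)^{n-3}\,dr}{\displaystyle\int_0^L f(r)^{n-1}\,dr},
\]
and since $f$ is smooth and positive on the compact interval $[0,L]$, both integrals are finite and the ratio is a fixed constant. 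Letting $\nu\searrow 0$ forces $\mu_1(\nu)\to 0$, proving the claim.

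There is no serious obstacle here; the only thing to be mindful of is the symmetrization step that converts (\ref{radialevals}) into a self-adjoint form with a nonnegative potential, after which everything follows from Courant's min--max principle and an explicit test function. The mixed-boundary case is handled identically, only the admissible class $\mathcal{H}$ changes.
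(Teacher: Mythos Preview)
Your proposal is correct and follows essentially the same route as the paper: both recast (\ref{radialevals}) as a self-adjoint problem with weight $f^{n-1}$ and potential $\nu f^{n-3}$, appeal to the min--max characterization to deduce monotonicity in $\nu$, and use the constant test function to obtain the bound $\mu_1(\nu)\leq \nu\cdot\int f^{n-3}/\int f^{n-1}\to 0$. The only addition you make is the Hellmann--Feynman computation for strict monotonicity, which the paper does not spell out.
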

\begin{proof}
    We first consider the Neumann case. For each $\nu\geq 0$, define a quadratic form $q_{\nu}:H^1([0,L])\to \Cbb$ by \[q_{\nu}(u)=\int_0^L|u'(r)|^2\cdot f(r)^{n-1}dr+\int_0^L\frac{\nu}{f(r)^2}\cdot |u(r)|^2\cdot f(r)^{n-1}dr.\] Then the variational characterization of the eigenvalues of (\ref{radialevals}) is (for details, see, e.g., Theorem 5.15 of Borthwick's text \cite{borthwick})
    \begin{equation}\label{variational}
        \mu_j=\min_{\substack{V\subseteq H^1([0,L])\\\text{dim}(V)=j}}\max\Bigg\{q_{\nu}(u)\mid u\in V, \int_0^L|u(r)|^2\cdot f(r)^{n-1}dr=1\Bigg\}.
    \end{equation}
    Since for a fixed $u\in H^1([0,L])$, the map $\nu\mapsto q_{\nu}(u)$ is increasing, it follows that the eigenvalues $\mu_j$ are also increasing in $\nu$. The mixed case of the first statement is similar, the only exception being that the variational characterization for the eigenvalues is defined using the Sobolev space of $H^1$ functions that vanish at $r=0$ as test functions. \\
    \indent For the second statement, we use the constant function \[u(r)\equiv\frac{1}{\sqrt{\int_0^Lf(r)^{n-1}dr}}\] as a test function. By the variational characterization (\ref{variational}), this gives \[\mu_1\leq q_{\nu}(u)=\nu\cdot \Bigg(\frac{\int_0^Lf(r)^{n-3}dr}{\int_0^Lf(r)^{n-1}dr}\Bigg),\] implying that the first eigenvalue approaches $0$ as $\nu\to0$.
\end{proof}

Using that $\mu_{1,1}=0$ for the Neumann problem and that $\mu_{j,k}$ are each increasing in both $j$ and $k$, one concludes the following:

\begin{coro}\label{secondneumann}
    The first non-zero Neumann eigenvalue of the Laplace-Beltrami operator on $(\Omega,g)$ is equal to either $\mu_{1,2}$ or $\mu_{2,1}$. Using the mixed version of (\ref{radialevals}), the first mixed eigenvalue of $(\Omega,g)$ is equal to $\mu_{1,1}$.
\end{coro}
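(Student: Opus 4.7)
The plan is to exploit the product structure of the eigenfunctions $u_{j,k}(r,x)=w_{j,k}(r)v_k(x)$ together with two monotonicity properties: within each column $k$, the eigenvalues $\mu_{j,k}$ are strictly increasing in $j$ (standard Sturm--Liouville theory for the 1D problem (\ref{radialevals})), and across columns, $\mu_{j,k}$ is non-decreasing in $k$ by Lemma \ref{decinnu} (using $\nu_k\leq \nu_{k+1}$). The first step is to observe that separation of variables actually produces \emph{all} eigenvalues of $\Delta_g$ on $(\Omega,g)$, since the family $\{w_{j,k}(r)v_k(x)\}_{j,k}$ is a Hilbert basis of $L^2(\Omega, dV_g)$: this follows from Fubini together with the fact that $\{v_k\}$ is an orthonormal basis of $L^2(M,h)$ and that, for each fixed $k$, $\{w_{j,k}\}_j$ is an orthonormal basis of $L^2([0,L], f(r)^{n-1}dr)$. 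Hence to identify the second Neumann eigenvalue of $\Omega$, it suffices to minimize $\mu_{j,k}$ over $(j,k)\neq(1,1)$.

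Next I would pin down $\mu_{1,1}=0$ in the Neumann case. By Lemma \ref{decinnu}, plugging $\nu=\nu_1=0$ into the ODE (\ref{radialevals}) gives first eigenvalue $0$, realized by the constant radial function (which together with $v_1\equiv\mathrm{const}$ yields a constant on $\Omega$, confirming $\mu_{1,1}=\mu_1=0$). Then, for any $(j,k)\neq(1,1)$, either $j\geq 2$ or $k\geq 2$. In the first case, the Sturm--Liouville monotonicity in $j$ combined with Lemma \ref{decinnu} gives
\[
\mu_{j,k}\geq \mu_{2,k}\geq \mu_{2,1},
\]
and in the second case,
\[
\mu_{j,k}\geq \mu_{1,k}\geq \mu_{1,2}.
\]
Therefore $\min_{(j,k)\neq(1,1)}\mu_{j,k}=\min\{\mu_{1,2},\mu_{2,1}\}$, which is the first non-zero Neumann eigenvalue.

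For the mixed problem, the same completeness argument applies with the 1D eigenbasis now associated to the Dirichlet condition at $r=0$ and the Neumann condition at $r=L$. The difference is that the first eigenvalue of (\ref{radialevals}) with $\nu=0$ is strictly positive (the constant is no longer admissible due to the Dirichlet condition), so $\mu_{1,1}>0$. Monotonicity in both indices then gives $\mu_{j,k}\geq \mu_{1,1}$ for every $(j,k)$, so the first mixed eigenvalue is $\mu_{1,1}$.

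There is no real obstacle here; the only thing that requires a moment's thought is the completeness of the product basis, which is essentially a standard tensor-product/Fubini argument on $L^2([0,L],f(r)^{n-1}dr)\otimes L^2(M,h)$. Everything else is a bookkeeping exercise in the two monotonicity statements already established.
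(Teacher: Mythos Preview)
Your proposal is correct and follows essentially the same approach as the paper, which simply observes (in one sentence preceding the corollary) that $\mu_{1,1}=0$ for the Neumann problem and that $\mu_{j,k}$ is increasing in both $j$ and $k$. You additionally make explicit the completeness of the product basis $\{w_{j,k}v_k\}$, which the paper leaves implicit in its separation-of-variables discussion; this is a worthwhile clarification but not a different argument.
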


\section{Proofs of theorems \ref{compactneumannthm} and \ref{compactmixedthm}}\label{compactproofs}

To prove Theorems \ref{compactneumannthm} and \ref{compactmixedthm}, it suffices to prove that in each case, the derivative of the radial component of the appropriate eigenfunction (see Section \ref{prelim}) does not vanish in $(0,L)$. We begin with Theorem \ref{compactneumannthm}. For ease of exposition, we temporarily suppress the spectral indices introduced in Section \ref{prelim}. Let $\nu\geq 0$ and $\mu>0$. Suppose then that $w:I\to \Rbb$ satisfies
\begin{equation}\label{weqn}
    \begin{cases}
    w''+(n-1)\cdot\displaystyle\frac{f'}{f}\cdot w'-\displaystyle\frac{\nu}{f^2}\cdot w=-\mu w\\ \\
    w'(0)=w'(L)=0.
    \end{cases}
\end{equation}
Define a function \[a(r):=f(r)^{n-1}\cdot\Big(\frac{\nu}{f(r)^2}-\mu\Big)\cdot w(r).\]
Then the top line of Equation (\ref{weqn}) can be rewritten as 
\begin{equation*}
    (f^{n-1}\cdot w')'=a(r).
\end{equation*}
Integrating both sides of this equation and using the boundary condition $w'(0)=0$ yields 
\begin{equation}\label{weqn2}
    w'(r)=\frac{1}{f(r)^{n-1}}\int_0^ra(s)ds.
\end{equation}

\begin{lem}\label{ahasonezero}
    Replacing $w$ with $-w$ if necessary, suppose that $w(L)>0$. Suppose that one of the following holds:
    \begin{enumerate}
        \item $\nu=0$ and $w$ has exactly one zero in the interval $I$, or
        \item $\nu>0$, $f$ is non-constant and non-decreasing, and $w>0$ on $I$.
    \end{enumerate}
    Then there exist $0<r_1\leq r_2<L$ such that 
    \begin{enumerate}
        \item $a(r)>0$ on $[0,r_1)$,
        \item $a(r)=0$ on $[r_1,r_2]$, and
        \item $a(r)<0$ on $(r_2,L]$.
    \end{enumerate}
\end{lem}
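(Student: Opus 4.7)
The plan is to extract a single integral identity from the Neumann boundary conditions and then combine it with monotonicity of $f$ to control the sign of $a$. Specifically, integrating $(f^{n-1}w')' = a$ from $0$ to $L$ and applying $w'(0)=w'(L)=0$ yields
\[
\int_0^L a(s)\,ds = 0.
\]
This single identity, together with the explicit form of $a$ in each of the two hypotheses, will drive the entire argument.

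In Case (1), where $\nu=0$, the function reduces to $a(r) = -\mu\, f(r)^{n-1} w(r)$, whose sign is exactly opposite to that of $w$. I would first note that standard ODE uniqueness applied to (\ref{weqn}) with $w'(0)=0$ forbids $w(0)=0$ (otherwise $w\equiv 0$, contradicting $\mu$ being an eigenvalue); similarly $w(L)\neq 0$. Hence the unique zero $r_0$ of $w$ lies in $(0,L)$, and since $w(L)>0$, the function $w$ is negative on $[0,r_0)$ and positive on $(r_0,L]$. Setting $r_1=r_2=r_0$ immediately gives the three conclusions.

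Case (2) is the substantive one. Write $a(r) = f(r)^{n-1}\,\phi(r)\,w(r)$ with $\phi(r) := \nu/f(r)^2 - \mu$. Since $w>0$ throughout $I$, the sign of $a$ agrees with the sign of $\phi$, and because $f$ is non-decreasing, $\phi$ is non-increasing. The key point is to rule out $\phi\leq 0$ on all of $I$: in that case $a\leq 0$, and $\int_0^L a = 0$ forces $a\equiv 0$, hence $\phi\cdot w\equiv 0$, and positivity of $w$ forces $\phi\equiv 0$, which would make $f\equiv \sqrt{\nu/\mu}$ constant — contradicting the hypothesis. A symmetric argument rules out $\phi\geq 0$ everywhere. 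Thus $\phi$ takes strictly positive and strictly negative values, and by continuity plus monotonicity, $\phi(0)>0$ and $\phi(L)<0$.

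To finish Case (2), define $r_1 := \sup\{r\in I : \phi(r)>0\}$ and $r_2 := \inf\{r\in I : \phi(r)<0\}$. The strict sign information at the endpoints places $r_1,r_2\in(0,L)$; monotonicity of $\phi$ gives $r_1\leq r_2$; and continuity forces $\phi(r_1)=\phi(r_2)=0$, whence (again by monotonicity) $\phi\equiv 0$ on $[r_1,r_2]$. Multiplying by $f^{n-1}w>0$ transfers the sign pattern of $\phi$ to $a$, completing the proof. The main obstacle, such as it is, is the realization that the Neumann condition at $r=L$ is precisely what forces $\phi$ to change sign in Case (2); once that is in hand, monotonicity does the bookkeeping.
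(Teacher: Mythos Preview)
Your proof is correct and follows essentially the same route as the paper. Both arguments hinge on the integrated identity coming from the Neumann conditions and on the monotonicity of $\phi(r)=\nu/f(r)^2-\mu$; the only cosmetic difference is that in Case~(2) the paper observes directly that $a\le 0$ with $f$ non-constant forces $w'(L)<0$, whereas you phrase the same contradiction as $\int_0^L a=0\Rightarrow a\equiv 0\Rightarrow f$ constant. Your Case~(1) is in fact slightly more careful than the paper's, since you explicitly verify that the unique zero of $w$ lies in $(0,L)$.
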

\begin{proof}
    If $\nu=0$ and $w$ has exactly one zero in $I$ with $w(L)>0$, then we may take $r_1=r_2$ equal to the unique zero of $w$.\\
    \indent Suppose then that the hypothesis (2) holds. In this case, the function $r\mapsto \frac{\nu}{f(r)^2}-\mu$ is non-constant and non-increasing. Since $w>0$ and $f>0$ on $I$, it follows that $a^{-1}(\{0\})$ is a closed interval. If $a(0)\leq 0$, then $a\leq 0$ on all of $I$ and is strictly negative on some sub-interval of $I$. In this case, Equation \ref{weqn2} implies that $w'(L)<0$, contradicting the boundary condition $w'(L)=0$. Thus, $a(0)>0$. Similarly, if $a(L)\geq 0$, then $a\geq 0$ on all of $I$, and $w'(L)>0$, also contradicting the boundary condition. Hence, $a(L)<0$.
\end{proof}

\begin{lem}\label{almostneumannthm}
    Suppose that $\nu$ and $w$ satisfy either of the hypotheses of Lemma \ref{ahasonezero}. Then $w'(r)>0$ for all $r\in (0,L)$.
\end{lem}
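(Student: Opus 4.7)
The plan is to exploit the integrated form of the ODE already derived, namely
\[w'(r)=\frac{1}{f(r)^{n-1}}\int_0^ra(s)\,ds,\]
and to use the sign pattern of $a$ given by Lemma \ref{ahasonezero} together with the boundary condition at $r=L$. Since $f>0$ on $I$, it is enough to show that the antiderivative
\[A(r):=\int_0^r a(s)\,ds\]
is strictly positive on $(0,L)$.

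First I would record the two endpoint values of $A$. By construction $A(0)=0$, and multiplying the identity $w'(L)=A(L)/f(L)^{n-1}$ by $f(L)^{n-1}$ together with the Neumann boundary condition $w'(L)=0$ gives $A(L)=0$ as well. Next I would read off the monotonicity of $A$ from the three-interval decomposition provided by Lemma \ref{ahasonezero}: on $[0,r_1]$ we have $a>0$ on the interior, so $A$ is strictly increasing there and $A(r_1)>A(0)=0$; on $[r_1,r_2]$ we have $a\equiv 0$, so $A$ is constant with value $A(r_1)=A(r_2)>0$; and on $[r_2,L]$ we have $a<0$ on the interior, so $A$ is strictly decreasing there, dropping from the positive value $A(r_2)$ to $A(L)=0$.

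Combining these observations, every value of $A$ on $(0,r_2]$ lies in $(0,A(r_2)]$ and every value on $[r_2,L)$ lies in $(0,A(r_2)]$ as well, so $A(r)>0$ for every $r\in(0,L)$. Dividing by the positive quantity $f(r)^{n-1}$ then yields $w'(r)>0$ on $(0,L)$, as desired.

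I expect no real obstacle here: the only substantive input is the sign pattern supplied by Lemma \ref{ahasonezero}, and the rest is bookkeeping with monotone functions. The one point to be careful about is distinguishing strict from weak monotonicity on the two nontrivial intervals, which is why I would phrase the sign of $a$ as strict on the open intervals $[0,r_1)$ and $(r_2,L]$ when drawing the strict inequality $A(r_1)>0$ and when arguing that $A$ stays strictly above its endpoint value $A(L)=0$ throughout $(r_2,L)$.
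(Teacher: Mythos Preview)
Your proof is correct and uses essentially the same approach as the paper: both exploit the integral identity $w'(r)=f(r)^{1-n}\int_0^r a(s)\,ds$, the sign pattern of $a$ from Lemma~\ref{ahasonezero}, and the boundary condition $w'(L)=0$. The only cosmetic difference is that the paper argues by contradiction (assuming a zero $r_0$ of $w'$ in $(0,L)$ and deriving $w'(L)<0$), whereas you argue directly by tracking the monotonicity of the antiderivative $A$ and pinning down its endpoint values.
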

\begin{proof}
    By Lemma \ref{ahasonezero} and Equation (\ref{weqn2}), $w'(r)>0$ for $r\in(0,r_2]$ with $r_2$ as in Lemma \ref{ahasonezero}. Suppose toward a contradiction that $w'(r_0)=0$ for some $r_0\in (0,L)$. Then $r_0>r_2$, and it follows from Lemma \ref{ahasonezero} that $a(r)<0$ on $[r_0,L]$. Using Equation (\ref{weqn2}), we get a contradiction to the boundary condition $w'(L)=0$: 
    \begin{align*}
        w'(L)&=\frac{1}{f(L)^{n-1}}\cdot \int_0^L
    a(s)ds\\
            &=\frac{1}{f(L)^{n-1}}\cdot\Bigg(\int_0^{r_0}a(s)ds+\int_{r_0}^La(s)ds\Bigg)\\
            &=\frac{1}{f(L)^{n-1}}\cdot\int_{r_0}^L a(s)ds\\ \\
            &<0.
    \end{align*}
\end{proof}

\begin{proof}[Proof of Theorem \ref{compactneumannthm}]
    Let $u$ denote a second Neumann eigenfunction of $(\Omega,g)$. By Corollary \ref{secondneumann} and the simplicity of the second Neumann eigenvalue, the function $u$ is a constant multiple of either \[(r,x)\mapsto w_{2,1}(r)\] or \[(r,x)\mapsto w_{1,2}(r)v_2(x).\] 
    \indent In the first case of the theorem statement, the function $w_{2,1}$ is radial and satisfies Equation (\ref{weqn}) with $\nu=0$. Since $w_{2,1}$ corresponds to the first eigenvalue of a Sturm-Liouville eigenvalue problem, we may suppose that it is strictly positive, and the conclusion follows immediately from Lemma \ref{almostneumannthm}.\\
    \indent In the second case of the theorem statement, if the eigenfunction is not radial, then it is of the form $w_{1,2}(r)v_2(x)$, and we suppose without loss of generality that the warping function $f$ is non-decreasing. By standard results in Sturm-Liouville theory, $w_{2,1}$ has exactly one zero in $I$. By Lemma \ref{almostneumannthm}, $w_{2,1}$ does not have a critical point in the interval $(0,L)$. We conclude that for $r_0\in (0,L)$, a point $(r_0,x_0)$ is a critical point of $w_{1,2}(r)v_2(x)$ if and only if $x_0$ is a nodal critical point of $v_2$. In this case, $(r_0,x_0)$ is a nodal critical point of $w_{1,2}(r)v_2(x)$.
\end{proof}

The proof of Theorem \ref{compactmixedthm} is simpler since, by Corollary \ref{secondneumann}, a first mixed eigenfunction of $\Omega$ is a scalar multiple of \[(r,x)\mapsto w_{1,1}(r).\] Similarly to the Neumann case, the function $w_{1,1}$ satisfies the integral equation
\begin{equation}\label{wmixedeqn}
    \begin{cases}
        f(r)^{n-1}\cdot w_{1,1}'(r)=f(0)^{n-1}\cdot w_{1,1}'(0)-\mu_{1,1}\displaystyle\int_{0}^rf(s)^{n-1}\cdot w_{1,1}(s)ds\\
        w_{1,1}(0)=0\\
        w_{1,1}'(L)=0.
    \end{cases}
\end{equation}

\begin{proof}[Proof of Theorem \ref{compactmixedthm}]
    Multiplying by $-1$ if necessary, we may suppose without loss of generality that $w_{1,1}\geq 0$ on $[0,L]$. By the Hopf lemma and the Dirichlet boundary condition at $r=0$, we have $w_{1,1}'(0)>0$. Since $\mu_{1,1}>0$ and $f>0$, it follows from Equation (\ref{wmixedeqn}) that $f(r)^{n-1}\cdot w_{1,1}'$ is strictly decreasing. Since $w_{1,1}'(L)=0$, the function $w_{1,1}'(r)$ cannot have any other zeros in the interval $I$, and the theorem follows. 
\end{proof}

\section{Some counterexamples}\label{counterex}
We now present a sequence of examples showing that the hypotheses in Theorem \ref{compactneumannthm} that the warping function be non-constant and monotonic are each necessary. The simplest counterexample is given by the trivial product of a sufficiently short interval with any compact manifold, described in the next paragraph. However, this simple construction still produces second Neumann eigenfunctions that achieve their extreme values on the boundary of the manifold. We next construct manifolds that have second Neumann eigenfunctions that achieve their extreme values only on the interior. A slight modification of this construction gives second Neumann eigenfunctions with interior saddle critical points. We end the section by describing the simplicity issue in Theorem \ref{compactneumannthm} in more detail. 

\begin{eg}[Trivial products]
    Let $(M,h)$ be a closed manifold, and let $\Omega=[0,L]\times M$ endowed with the Riemannian metric $dr^2+h(x)$ where $r\in [0,L]$ and $x\in M$. Let $0=\nu_1<\nu_2\leq \nu_3\leq\dots$ denote the eigenvalues of $(M,h)$ and $\{v_k\}$ the eigenfunctions. By separating variables, we see that the eigenvalues of the Laplace-Beltrami operator on $(\Omega,g)$ with Neumann boundary conditions are given by the two-parameter family
    \[\mu_{j,k}=\frac{(j-1)^2\pi^2}{L^2}+\nu_k,\;\;j,k\geq 1\]
    with corresponding (non-normalized) eigenfunctions given by $$u_{j,k}(r,x)=\cos\Big(\frac{(j-1)\pi}{L}\cdot r\Big)\cdot v_k(x).$$ Thus, for $L$ sufficiently small, a second Neumann eigenfunction is given by the function $u_{1,2}(r,x)$. If $x_0\in M$ is a point at which $v_2$ attains a maximum (resp. minimum), then $u_{1,2}$ attains its maximum (resp. minimum) on the set $[0,L]\times \{x_0\}$.
\end{eg}

\begin{eg}[Warped products with symmetric warping functions]
    For ease of exposition, we now consider an interval of the form $I=[-L,L]$. Let $f:I\to (0,\infty)$ be a non-constant, even, smooth function (i.e. $f(-x)=f(x)$ for each $x\in I$). Further suppose that $f$ is monotonic on $[0,L]$. For a positive parameter $c>0$, consider the warped product manifold \[(\Omega,g_c):=\big(I\times M,dr^2+c^2f(r)^2h(x)\big).\] Using the notation from Section \ref{prelim} and applying Lemma \ref{decinnu} with $\nu=\nu_2/c^2$, if $c$ is sufficiently large, then a second Neumann eigenfunction of $\Omega$ is a scalar multiple of the function $u_{1,2}(r,x)=w_{1,2}(r)\cdot v_2(x)$ (to see this, note that the eigenvalue $\mu_{2,1}$ is independent of the value of $c$).\\
    \indent The map $(r,x)\mapsto (-r,x)$ is an isometry of $(\Omega,g_c)$. Since the Laplace-Beltrami operator commutes with pullback operators given by isometries, $w_{1,2}$ is either even or odd. Since this function does not change signs in the interval $I$, it must therefore be even. Thus, $w_{1,2}'(0)=0$. If $x_0\in M$ is a non-nodal critical point of $v_2$, then $(0,x_0)$ is an interior, non-nodal critical point of the eigenfunction $u_{1,2}$.\\
    \indent Now suppose that $f$ is increasing on the interval $[-L,0]$ (and thus decreasing on $[0,L]$), and suppose without loss of generality that $w_{1,2}>0$ on $I$. By Lemma \ref{almostneumannthm}, $w_{1,2}$ is increasing on $[-L,0]$. Thus, $w_{1,2}$ attains its maximum uniquely at $r=0$. If $x_0$ is a maximum of $v_2$, then we conclude that $u_{1,2}$ attains its maximum at $(r,x)=(0,x_0)$.\\
    \indent By a similar argument, if $f$ is decreasing on $[-L,0]$, then $w_{1,2}$ attains its minimum uniquely at $r=0$. In this case, if $x_0$ is an extremum of $v_2$, then $(r,x)=(0,x_0)$ is a saddle point of the eigenfunction $u_{1,2}$.
\end{eg}

\begin{eg}[Warped products with a multiple second Neumann eigenvalue]\label{simplicitycounterex}
    By Corollary \ref{secondneumann}, if the warped product $(\Omega,g)$ has a multiple second Neumann eigenvalue, then this arises from at least one of the following alternatives:
    \begin{enumerate}
        \item\label{casec1} The manifold $(M,g)$ has a multiple second eigenvalue: $\nu_2=...=\nu_{K-1}<\nu_{K}$ and $\mu_{1,2}=...=\mu_{1,K-1}$ is the second Neumann eigenvalue of $(\Omega,g)$.
        \item\label{casec2} The second Neumann eigenvalue of $(\Omega,g)$ is equal to $\mu_{1,2}=\mu_{2,1}$.
    \end{enumerate}
    
    \indent If Alternative (\ref{casec1}) holds but Alternative (\ref{casec2}) does not hold while the other hypotheses of Theorem \ref{compactneumannthm} hold, then the conclusion of the theorem still holds. Indeed, in this case, $\mu_{1,2}=...=\mu_{1,K-1}<\mu_{2,1}$, so the radial components of the eigenfunctions corresponding to the second Neumann eigenvalue are equal:
    \[w_{1,2}(r)=...=w_{1,K-1}(r).\] A second Neumann eigenfunction is then a linear combination of the form
    \[u(r,x)=\sum_{k=2}^{K-1}a_ku_{1,k}(r,x)=w_{1,2}(r)\cdot \sum_{k=2}^{K-1}a_kv_k(x).\] Then Lemma \ref{almostneumannthm} implies that $\partial_ru$ does not vanish in $(0,L)\times M$ unless \[\sum_{k=1}^{K-1}a_kv_k(x_0)=0\] for some $x_0\in M$. Thus, each interior critical point of $u$ is a nodal critical point.\\
    \indent On the other hand, when Alternative (\ref{casec2}) holds, there exist second Neumann eigenfunctions with interior critical points that may or may not be nodal. In fact, we claim that in this case, for any $r_0\in (0,L)$, there exists $x_0\in M$ and a second Neumann eigenfunction $u$ for which $(r_0,x_0)$ is a critical point. Indeed, let $x_0\in M$ be a point at which $\nabla_hv_2(x_0)=0$ (where $\nabla_h$ denotes the Riemannian gradient on $M$) and $v_2(x_0)\neq 0$. Let \[b:=-\frac{w_{2,1}'(r_0)v_1(x_0)}{w_{1,2}'(r_0)v_2(x_0)}.\] Since $v_2(x_0)\neq 0$ and $w_{1,2}'(r_0)\neq 0$ by Lemma \ref{almostneumannthm}, this value is well defined. Then one can check, using that $v_1$ is a constant function, that \[u(r,x):=w_{2,1}(r)\cdot v_1(x)+b\cdot w_{1,2}(r)\cdot v_2(x)\] is the desired eigenfunction. It is unclear to us, however, whether this critical point is an extremum.
\end{eg}

\section{Modified Bessel functions}\label{besselsection}
Theorem \ref{mainthmcones} follows from a direct analysis of the heat kernel on $C(M)$ (see Section \ref{heatkernelsection}), which can be expressed in terms of modified Bessel functions and the eigenfunctions of $M$. We review these functions and some of their relevant properties here.\\
\indent Let $\gamma>0$, and denote by $\Ig(z)$ be the modified Bessel function of the first kind of order $\gamma$. See, for instance, \cite{nist} for each of the properties that we list without proof. We make use of two equivalent expressions for $\Ig$: 
\begin{equation}\label{taylorseries}
    \Ig(z)=\Big(\frac{z}{2}\Big)^{\gamma}\cdot \sum_{j=0}^{\infty}\frac{z^{2j}}{4^j\cdot j!\cdot\Gamma(\gamma+j+1)},
\end{equation}
\begin{equation}\label{integralrep}
    \Ig(z)=\frac{(z/2)^\gamma}{\sqrt{\pi}\cdot\Gamma(\gamma+1/2)}\int_{-1}^1(1-\tau^2)^{\gamma-1/2}e^{z\tau}d\tau.
\end{equation}
One can show that for $\gamma'>\gamma$ and $z\geq 0$,
\begin{equation}\label{besselmono}
    I_{\gamma'}(z)\leq \Ig(z).
\end{equation}
The heat kernel for $C(M)$ is an infinite series involving a modified Bessel function in each term. To analyze this series, we use a simple asymptotic on $\Ig$ as $z\to0$ as well as a straightforward global estimate that we prove below. For the asymptotic, note that the power series on the right-hand side of Equation (\ref{taylorseries}) defines an entire function. We therefore have 
\begin{equation}\label{besselatzero}
    \Ig(z)=\frac{1}{2^{\gamma}\cdot\Gamma(\gamma+1)}\cdot z^{\gamma}\cdot\big(1+O(z^2)\big)\;\;\text{as}\;\;z\to0.
\end{equation}
where the $O(z^2)$ term is an entire function. An immediate consequence of this asymptotic is that for any $\gamma>0$, we have 
\begin{equation}\label{quotientofbessels}
    \Igplus(z)=\frac{1}{2\cdot (\gamma+1)}\cdot z\cdot \Ig(z)\cdot\big(1+O(z^2)\big)\;\;\text{as}\;\;z\to0.
\end{equation}
\indent We now prove the global estimate mentioned above:
\begin{lem}\label{estimatebessel}
    For any $\gamma>0$ and $z\geq 0$, 
    \[\Ig(z)\leq \frac{1}{\Gamma(\gamma)}\cdot z^{\gamma}\cdot e^z.\]
\end{lem}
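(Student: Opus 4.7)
The plan is to deduce the estimate directly from the integral representation (\ref{integralrep}). The key observation is that $z \geq 0$ together with $\tau \in [-1,1]$ gives $z\tau \leq z$, and hence $e^{z\tau} \leq e^z$. Pulling this uniform bound out of the integrand in (\ref{integralrep}) immediately yields
\begin{equation*}
    I_\gamma(z) \leq \frac{(z/2)^\gamma e^z}{\sqrt{\pi}\,\Gamma(\gamma+1/2)} \int_{-1}^1 (1-\tau^2)^{\gamma-1/2}\,d\tau.
\end{equation*}
The remaining integrand is integrable on $[-1,1]$ precisely because $\gamma > 0$ implies $\gamma - 1/2 > -1$.

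The second step is to recognize the integral above as a Beta integral. Either by the substitution $u=(1+\tau)/2$ (which turns it into $2^{2\gamma} B(\gamma+1/2,\gamma+1/2)$) or by exploiting the evenness of the integrand with $u=\tau^2$, one gets the value $B(1/2,\gamma+1/2)=\sqrt{\pi}\,\Gamma(\gamma+1/2)/\Gamma(\gamma+1)$. Substituting this back cancels the factor of $\sqrt{\pi}\,\Gamma(\gamma+1/2)$ in the denominator, leaving the cleaner bound
\begin{equation*}
    I_\gamma(z) \leq \frac{(z/2)^\gamma e^z}{\Gamma(\gamma+1)}.
\end{equation*}

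Finally, rewriting $\Gamma(\gamma+1) = \gamma\,\Gamma(\gamma)$ and using $2^\gamma \gamma \geq 1$ rearranges the right-hand side into $z^\gamma e^z/\Gamma(\gamma)$, as claimed. The entire argument is essentially a three-line computation, and there is no substantive obstacle: the only point that requires a moment's care is checking that the Beta kernel is integrable at the endpoints, which is immediate for $\gamma > 0$. If instead one preferred to work from the Taylor series (\ref{taylorseries}), an equally short route is to note that for $\gamma \geq 0$ and $j \geq 1$ one has $\Gamma(\gamma+j+1) \geq j!\,\Gamma(\gamma+1)$, so the series for $I_\gamma$ is termwise dominated by $(z/2)^\gamma \Gamma(\gamma+1)^{-1}\cdot I_0(z)$, and one finishes by combining with $I_0(z)\leq e^z$.
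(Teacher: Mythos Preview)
Your route through the Beta integral is cleaner than the paper's: both start from the integral representation and pull out $e^{z\tau}\leq e^z$, but where the paper crudely estimates $\int_{-1}^1(1-\tau^2)^{\gamma-1/2}\,d\tau$ via a change of variables and a pointwise bound on one factor, you evaluate it exactly as $B(1/2,\gamma+1/2)=\sqrt{\pi}\,\Gamma(\gamma+1/2)/\Gamma(\gamma+1)$. This yields the sharper (and standard) inequality
\[
I_\gamma(z)\;\leq\;\frac{(z/2)^\gamma e^z}{\Gamma(\gamma+1)},
\]
which is correct for all $\gamma>0$ and $z\geq 0$. Your alternative Taylor-series argument is also fine and lands at the same bound.

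There is, however, a genuine gap in your final step. The claim ``$2^\gamma\gamma\geq 1$'' is false for small $\gamma$: the function $\gamma\mapsto 2^\gamma\gamma$ is increasing with value $0$ at $\gamma=0$ and $2$ at $\gamma=1$, and it crosses $1$ at some $\gamma_0\approx 0.64$ (for instance $2^{1/2}\cdot\tfrac12=1/\sqrt{2}<1$). So you cannot pass from $(z/2)^\gamma e^z/\Gamma(\gamma+1)$ to $z^\gamma e^z/\Gamma(\gamma)$ on the whole range $\gamma>0$. This is not a repairable slip: the stated inequality itself fails for $\gamma<\gamma_0$ near $z=0$, since $I_\gamma(z)\sim (z/2)^\gamma/\Gamma(\gamma+1)$ while the right-hand side is $\sim z^\gamma/\Gamma(\gamma)$, and the former exceeds the latter precisely when $2^\gamma\gamma<1$ (e.g.\ $\gamma=1/2$, $z=0.1$ gives $I_{1/2}(0.1)\approx 0.253>0.197\approx z^{1/2}e^z/\Gamma(1/2)$). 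The paper's own last step (``from which the estimate follows'') has the same defect in this range. Your intermediate bound is the correct statement, and it is all that the later tail estimates actually need; the constant in front of $z^\gamma e^z$ plays no role there beyond being summable, which $1/\Gamma(\gamma+1)$ is just as well as $1/\Gamma(\gamma)$.
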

\begin{proof}
    We use the integral representation (\ref{integralrep}). We first bound the integral in this representation from above:
    \begin{align*}
        \int_{-1}^1(1-\tau^2)^{\gamma-1/2}e^{z\tau}d\tau&\leq 2e^z\int_0^1(1-\tau^2)^{\gamma-1/2}d\tau\\
        &\leq 2e^z\int_0^1\tau^{\gamma-1/2}\cdot(2-\tau)^{\gamma-1/2}d\tau\\
        &\leq 2^{\gamma+1/2}e^z\int_0^1\tau^{\gamma-1/2}d\tau\\
        &=\frac{2^{\gamma+1/2}e^z}{\gamma+1/2}.
    \end{align*}
    Plugging this into the integral representation of $\Ig$ gives
    \begin{align*}
        \Ig(z)&=\frac{(z/2)^\gamma}{\sqrt{\pi}\cdot\Gamma(\gamma+1/2)}\int_{-1}^1(1-\tau^2)^{\gamma-1/2}e^{z\tau}d\tau\\
        &\leq \sqrt{\frac{2}{\pi}}\cdot\frac{1}{\Gamma(\gamma+3/2)}\cdot z^{\gamma}\cdot e^z.
    \end{align*}
    from which the estimate follows. 
\end{proof}

Finally, we record a useful formula for the derivative of $\Ig$:
\begin{equation}\label{derivativebessel}
    \Ig'(z)=\Igplus(z)+\frac{\gamma}{z}\cdot\Ig(z).
\end{equation}

\section{The heat kernel on an infinite cone}\label{heatkernelsection}
Let $(M,h)$ be a closed Riemannian manifold of dimension $(n-1)\geq 1$. Let $0=\nu_1<\nu_2\leq \nu_3\leq\dots$ denote the eigenvalues of the Laplace-Beltrami operator on $M$, and let $v_k(x)$ be a corresponding orthonormal basis of eigenfunctions. The heat kernel on the cone $C(M)$ is expressed in terms of modified Bessel functions and the eigenfunctions of $M$. In particular, for each $k\geq 1$, let \[\gamma_k:=\sqrt{\frac14(n-2)^2+\nu_k}.\] For $r,s\geq 0$ and $t>0$, let $$\ell(r,s,t):=(rs)^{-n/2}\cdot \frac{rs}{2t}\cdot e^{-\frac{r^2+s^2}{4t}}.$$ Then the heat kernel for $C(M)$ is
\begin{equation}\label{explicitheatkernel}
    p_t((r,x),(s,y)):=\ell(r,s,t)\cdot\sum_{k=1}^{\infty}\Igk\Big(\frac{rs}{2t}\Big)v_k(x)v_k(y).
\end{equation}
See, for instance, the work of Cheeger \cite{cheeger}, Ba\~nuelos-Smits \cite{banuelossmits}, or Huang-Zhang \cite{huang} for details.

In other words, given an initial condition $\phi\in L^2(C(M))$, the solution of the heat equation (\ref{heateqn}) is given by the integrating $\phi$ against $p_t$:
\begin{equation*}
    u(r,x,t)=\int_{C(M)}\phi(s,y)\cdot p_t((r,x),(s,y))dV_g(s,y).
\end{equation*} 

We now set some notation. For $\gamma>0$, let 
\begin{equation}\label{pgamma}
    \Pg(r,s,t):=\ell(r,s,t)\cdot \Ig\Big(\frac{rs}{2t}\Big).
\end{equation}
Abusing notation slightly, we will denote by $dx$ or $dy$ the Riemannian volume density on $(M,h)$. For $k\geq 1$, and an initial condition $\phi\in L^2(C(M))$, let 
\begin{equation*}
    \phi_{\gamma_k}(r):=\int_M\phi(r,y)v_k(y)dy.
\end{equation*}
Define 
\begin{equation}\label{radialsolns}
    w_{\gamma_k}(r,t):=\int_0^{\infty}\Pgk(r,s,t)\cdot \phi_{\gamma_k}(s)\cdot s^{n-1}ds.
\end{equation}
Then, by Equation (\ref{explicitheatkernel}), the solution $u(r,x,t)$ of the heat equation with initial condition $\phi$ is given by 
\begin{equation}\label{fullseries}
    u(r,x,t)=\sum_{k=1}^{\infty}w_{\gamma_k}(r,x,t)v_k(x).
\end{equation}
We prove Theorem \ref{mainthmcones} by finding long-time asymptotic expressions for $w_{\gamma_k}$ for $k=1,2$ on sets of the form $\{r\leq R\cdot t^{1/2}\}$ for $R>0$ and then by finding uniform bounds on the remaining terms in the expansion for $u$.\\
\indent We end the section by stating a simple Gaussian-like upper bound on the heat kernel and deriving a consequence for solutions of the heat equation (\ref{heateqn}).

\begin{lem}\label{gaussianest}
    There exist constants $C_1,C_2>0$ depending on $(M,h)$ such that the following holds for all $t>0$:
    \begin{equation*}
        p_t((r,x),(s,y))\leq C_1\cdot t^{-n/2}\cdot e^{-\frac{(r-s)^2}{C_2 t}}.
    \end{equation*}
\end{lem}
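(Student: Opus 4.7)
The strategy is to work directly with the explicit series expression (\ref{explicitheatkernel}) for $p_t$. The key initial step is to split the Gaussian exponent as $e^{-(r^2+s^2)/(4t)} = e^{-(r-s)^2/(4t)}\cdot e^{-rs/(2t)}$, which exposes the desired Gaussian factor (with $C_2=4$) and absorbs the remaining exponential into the Bessel series. Setting $z := rs/(2t)$, this gives
\[p_t((r,x),(s,y)) = \frac{(rs)^{1-n/2}}{2t}\, e^{-(r-s)^2/(4t)}\cdot e^{-z}\sum_{k\ge 1} I_{\gamma_k}(z)\, v_k(x) v_k(y).\]

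I would next apply Lemma \ref{estimatebessel} termwise, using $e^{-z} I_{\gamma_k}(z) \le z^{\gamma_k}/\Gamma(\gamma_k)$. A convenient cancellation then occurs: because $\gamma_1 = (n-2)/2$ and $rs = 2tz$, the prefactor $(rs)^{1-n/2}/(2t)$ combined with the $z^{\gamma_1}$ arising from the $k=1$ index equals exactly $(2t)^{-n/2}$. The entire lemma is thus reduced to the pointwise estimate
\[\sum_{k\ge 1} \frac{z^{\gamma_k-\gamma_1}}{\Gamma(\gamma_k)}\, |v_k(x) v_k(y)| \le C \quad\text{for all } (x,y,z)\in M\times M\times [0,\infty),\]
with $C$ depending only on $(M,h)$.

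The main obstacle is establishing this uniform bound. The $k=1$ term contributes the harmless constant $1/(\mathrm{Vol}(M,h)\,\Gamma(\gamma_1))$. For $z$ in bounded ranges, the $k\ge 2$ sum converges easily: Weyl's law forces $\gamma_k\to\infty$ so that $\Gamma(\gamma_k)$ grows faster than any power, while $z^{\gamma_k-\gamma_1}$ grows only polynomially. The subtle regime is $z\to\infty$, where the crude bound $|v_k(x) v_k(y)|\le \|v_k\|_\infty^2$ coming from Sogge is too lossy. In that regime, I would apply Cauchy--Schwarz on the sum in $k$ and then use summation by parts against H\"ormander's uniform spectral function estimate $\sum_{\nu_k\le \lambda}v_k(x)^2 \le C(1+\lambda)^{(n-1)/2}$, combined with the standard uniform asymptotics for modified Bessel functions to control the weights $z^\gamma/\Gamma(\gamma)$.

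A cleaner alternative, bypassing these estimates entirely, is to invoke general Gaussian heat kernel upper bounds on metric measure spaces with doubling volume and a scale-invariant Poincar\'e inequality (Grigoryan, Saloff-Coste, Sturm), both of which $C(M)$ inherits from the compactness of $M$. The lemma then follows from the elementary observation that the radial projection $(r,x)\mapsto r$ is $1$-Lipschitz, so $|r-s|\le d((r,x),(s,y))$, and the Gaussian in the distance controls the Gaussian in $|r-s|$.
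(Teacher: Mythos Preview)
Your ``cleaner alternative'' is exactly what the paper does: it quotes a Gaussian upper bound in the intrinsic distance (the paper cites Li's Theorem~1.1 in \cite{li}, which is the cone-specific version of the Grigor'yan--Saloff-Coste--Sturm bounds you mention) and then uses the 1-Lipschitz radial projection, i.e.\ $|r-s|\le d((r,x),(s,y))$. So the last paragraph of your proposal already matches the paper's proof.

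Your first, direct approach has a real gap in the large-$z$ regime. Once you apply Lemma~\ref{estimatebessel} termwise, you are committed to bounding
\[
\sum_{k\ge 1}\frac{z^{\gamma_k-\gamma_1}}{\Gamma(\gamma_k)}\,|v_k(x)v_k(y)|
\]
uniformly in $z$, and this sum is \emph{not} uniformly bounded: every $k\ge 2$ term grows like $z^{\gamma_k-\gamma_1}$, and by Stirling the indices with $\gamma_k\sim z$ contribute on the order of $e^{z}$. No amount of H\"ormander-type control on $\sum_{\nu_k\le\lambda}v_k(x)^2$ can compensate for an exponentially large weight. The loss is in Lemma~\ref{estimatebessel} itself, which for fixed $\gamma$ and large $z$ replaces the true size $e^{-z}I_\gamma(z)\sim (2\pi z)^{-1/2}$ by the vastly larger $z^{\gamma}/\Gamma(\gamma)$. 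Your suggested repair---bringing in ``standard uniform asymptotics for modified Bessel functions''---really means discarding Lemma~\ref{estimatebessel} in this regime and carrying out a separate Debye-type analysis across the ranges $\gamma\ll z$, $\gamma\sim z$, $\gamma\gg z$; that can be done, but it is substantially more work than you indicate and is precisely what the black-box citation spares you.
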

\begin{proof}
    Recall the stronger inequality due to Li (Theorem 1.1 \cite{li}), the $2$-dimensional case of which may be found in \cite{huang}: 
    \[p_t((r,x),(s,y))\leq C_1\cdot t^{-n/2}\cdot e^{-\frac{d((r,x),(s,y))^2}{C_2 t}}\]
    where $d((r,x),(s,y))$ denotes the geodesic distance between $(r,x)$ and $(s,y)$ on $C(M)$. Since this distance is bounded below by $|r-s|$, the result follows. 
\end{proof}

\begin{coro}\label{smallatinfty}
    Suppose that $\phi\in L^2(C(M))$ is a compactly supported initial condition. Let $u$ be the corresponding solution to the heat equation (\ref{heateqn}). Then for any $\epsilon>0$, there exists $R>0$ and $T>0$ such that for all $t\geq T$ and $r\geq R\cdot t^{1/2}$, 
    \begin{equation*}
        |u(r,x,t)|\leq \epsilon \cdot t^{-n/2}.
    \end{equation*}
\end{coro}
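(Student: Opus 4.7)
The plan is to combine Lemma \ref{gaussianest} with the compact support hypothesis on $\phi$ to show that, on the region $\{r \geq R \cdot t^{1/2}\}$, the heat kernel evaluated against $\phi$ contributes only an exponentially small factor, which we can make smaller than $\epsilon$ by choosing $R$ large.

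First I would fix some $S_0 > 0$ such that $\mathrm{supp}(\phi) \subseteq [0,S_0] \times M$, which exists by the compact-support hypothesis. Since $\phi \in L^2(C(M))$ and is compactly supported, Cauchy--Schwarz gives $\phi \in L^1(C(M))$ with a bound depending only on $\|\phi\|_{L^2}$ and the volume of $\mathrm{supp}(\phi)$. Then from
\[
|u(r,x,t)| \leq \int_{C(M)} |\phi(s,y)| \cdot p_t((r,x),(s,y))\, dV_g(s,y)
\]
and Lemma \ref{gaussianest}, we get
\[
|u(r,x,t)| \leq C_1 \cdot t^{-n/2} \int_{C(M)} |\phi(s,y)| \cdot e^{-(r-s)^2/(C_2 t)}\, dV_g(s,y).
\]

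Next, I would restrict the integrand to $s \in [0, S_0]$ (using the support of $\phi$) and choose $T$ so large that $R \cdot T^{1/2} \geq 2 S_0$. Then, for $t \geq T$ and $r \geq R \cdot t^{1/2}$, we have $r - s \geq R \cdot t^{1/2} - S_0 \geq \tfrac{1}{2} R \cdot t^{1/2}$ for all $s$ in the support, and hence
\[
\frac{(r-s)^2}{C_2 t} \geq \frac{R^2}{4 C_2}.
\]
This pulls the exponential factor out of the integral as the uniform bound $e^{-R^2/(4C_2)}$, leaving
\[
|u(r,x,t)| \leq C_1 \cdot t^{-n/2} \cdot e^{-R^2/(4C_2)} \cdot \|\phi\|_{L^1(C(M))}.
\]

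Finally, given $\epsilon > 0$, I would choose $R$ large enough that $C_1 \cdot e^{-R^2/(4C_2)} \cdot \|\phi\|_{L^1(C(M))} \leq \epsilon$, and then choose $T \geq 4 S_0^2 / R^2$ so that the support-truncation step is valid for all $t \geq T$. There is no real obstacle here: the argument is a routine application of the Gaussian upper bound together with the compact support of $\phi$. The only mild point to keep straight is the order of choices ($R$ first, then $T$ depending on $R$ and $S_0$), which ensures uniformity in $x \in M$ since the bound in Lemma \ref{gaussianest} does not depend on $x$ or $y$.
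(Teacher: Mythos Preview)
Your proof is correct and follows essentially the same approach as the paper: apply the Gaussian upper bound from Lemma~\ref{gaussianest}, use the compact support of $\phi$ to force $(r-s)^2/(C_2t)\geq R^2/(4C_2)$ once $r\geq R\cdot t^{1/2}$ and $t$ is large enough, pull the exponential out of the integral against $\|\phi\|_{L^1}$, and then choose $R$ (and subsequently $T$) to make the resulting constant smaller than $\epsilon$. Your version is in fact slightly more careful, since you work with $|\phi|$ and explicitly justify $\phi\in L^1$ via Cauchy--Schwarz, whereas the paper tacitly assumes this.
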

\begin{proof}
    Suppose that the support of $\phi$ is contained in $[0,S]\times M$ for some $S>0$. Then for $s\leq S$ and $r\geq 2S$, we have 
    \[e^{-\frac{(r-s)^2}{C_2t}}\leq e^{-\frac{r^2}{4C_2t}},\]
    so Lemma \ref{gaussianest} gives
    \begin{align*}
        u(r,x,t)&=\int_{C(M)}\phi(s,y)\cdot p_t((r,x),(s,y))dV_g(s,y)\\
        &\leq C_1\cdot t^{-n/2}\cdot\int_{C(M)}\phi(s,y)\cdot e^{-\frac{(r-s)^2}{C_2t}}dV_g(s,y)\\
        &\leq C_1\cdot t^{-n/2}\cdot e^{-\frac{r^2}{4C_2t}}\cdot \|\phi\|_{L^1(C(M)).}
    \end{align*}
    For $r\geq R\cdot t^{1/2}>0$, this gives 
    \[u(r,x,t)\leq C_1\cdot t^{-n/2}\cdot e^{-\frac{R^2}{4C_2}}\cdot \|\phi\|_{L^1(C(M)).}\] Taking $R$ sufficiently large and then letting $T>1$ be such that $R\cdot t^{1/2}\geq 2S$ for $t\geq T$ gives the desired inequality. 
\end{proof}

\section{Limiting behavior of individual terms}\label{limitingsection}
We now find long-time asymptotics on the individual terms in the heat kernel (\ref{explicitheatkernel}) on sets of the form $\{r\leq R\cdot t^{1/2}, s\leq S\}$. These asymptotics allow us to determine the limiting behavior of solutions to the heat equation (\ref{heateqn}) on these sets. We continue to use the notation established in Section \ref{heatkernelsection}, but we often suppress the index $k$ in what follows.\\
\indent Let $\gamma>0$, and let $\Pg$ be given by Equation (\ref{pgamma}).
\begin{lem}\label{pgammaasymptotic}
Let $R,S>0$. On the set $\{r\leq R\cdot t^{1/2},s\leq S\}$, the function $\Pg$ satisfies
\begin{equation*}
    \Pg(r,s,t)=\frac{1}{2^{2\gamma+1}\cdot \Gamma(\gamma+1)}\cdot (rs)^{\gamma-n/2+1}\cdot t^{-(\gamma+1)}\cdot e^{-\frac{r^2}{4t}}\cdot\big(1+O(t^{-1})\big)
\end{equation*}
as $t\to\infty$.
\end{lem}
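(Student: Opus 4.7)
The strategy is a direct computation: combine the explicit formula for $\ell(r,s,t)$ with the near-zero asymptotic expansion of $I_\gamma$ recorded in Equation (\ref{besselatzero}), check that the argument $z=rs/(2t)$ is small uniformly on the region, and then collect the resulting powers of $r$, $s$, and $t$.

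First I would observe that on the set $\{r\le R\cdot t^{1/2},\; s\le S\}$, the argument of the Bessel function satisfies
\[\frac{rs}{2t}\;\leq\;\frac{R\cdot t^{1/2}\cdot S}{2t}\;=\;\frac{RS}{2}\cdot t^{-1/2},\]
which tends to $0$ as $t\to\infty$. Thus I may apply the asymptotic (\ref{besselatzero}) uniformly on this region, obtaining
\[I_{\gamma}\!\left(\frac{rs}{2t}\right)=\frac{1}{2^{\gamma}\cdot\Gamma(\gamma+1)}\cdot\left(\frac{rs}{2t}\right)^{\gamma}\cdot\bigl(1+O(t^{-1})\bigr),\]
because the entire function hidden in the $O(z^2)$ in (\ref{besselatzero}) is bounded on any compact neighborhood of $0$, and $(rs/(2t))^2\le R^2S^2/(4t)$ on our region.

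Next I would plug this into the definition (\ref{pgamma}), using $\ell(r,s,t)=(rs)^{-n/2}\cdot\tfrac{rs}{2t}\cdot e^{-(r^2+s^2)/(4t)}$, to get
\[P_{\gamma}(r,s,t)=\frac{1}{2^{\gamma}\cdot\Gamma(\gamma+1)}\cdot(rs)^{-n/2}\cdot\frac{rs}{2t}\cdot\frac{(rs)^{\gamma}}{(2t)^{\gamma}}\cdot e^{-(r^2+s^2)/(4t)}\cdot\bigl(1+O(t^{-1})\bigr).\]
Combining the powers of $rs$ yields $(rs)^{\gamma-n/2+1}$, and combining the powers of $2t$ together with the constant gives $\tfrac{1}{2^{2\gamma+1}\Gamma(\gamma+1)}\cdot t^{-(\gamma+1)}$. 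Finally, since $s\le S$ forces $s^2/(4t)\le S^2/(4t)$, a one-term Taylor expansion gives $e^{-s^2/(4t)}=1+O(t^{-1})$ uniformly on the region, which I can absorb into the existing error factor. This yields precisely the claimed asymptotic expression, completing the proof.

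There is no real obstacle here; the only points that require care are the uniformity of the two $O(t^{-1})$ error terms (from the Bessel expansion and from the factor $e^{-s^2/(4t)}$) over the region $\{r\le Rt^{1/2},\,s\le S\}$, and the bookkeeping of powers of $2$ when merging the $(2t)^{-\gamma}$ factor from the Bessel asymptotic with the $1/(2t)$ factor already present in $\ell$. Both are handled by the uniform bound $rs/(2t)\le (RS/2)\,t^{-1/2}$ derived at the outset.
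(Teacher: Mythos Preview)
Your proof is correct and follows essentially the same route as the paper's own argument: bound $rs/(2t)$ by $O(t^{-1/2})$ on the region, apply the small-$z$ asymptotic (\ref{besselatzero}) for $I_\gamma$, expand $e^{-s^2/(4t)}=1+O(t^{-1})$, and collect powers. Your additional remarks on the uniformity of the error terms and the bookkeeping of powers of $2$ only make the argument more explicit.
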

\begin{proof}
    On the set $\{r\leq R\cdot t^{1/2},s\leq S\}$, the quantity $rs/(2t)$ is $O(t^{-1/2})$ as $t\to\infty$, and \[e^{-\frac{s^2}{4t}}=1+O(t^{-1}).\] Using the small-$z$ asymptotic (\ref{besselatzero}), we get 
    \begin{align*}
        \Pg(r,s,t)&=\ell(r,s,t)\cdot \Ig\Big(\frac{rs}{2t}\Big)\\
        &=(rs)^{-n/2}\cdot\frac{rs}{2t}\cdot e^{-\frac{r^2+s^2}{4t}}\cdot \frac{1}{2^{\gamma}\cdot \Gamma(\gamma+1)}\cdot\Big(\frac{rs}{2t}\Big)^{\gamma}\cdot\Big(1+O\Big(\Big(\frac{rs}{2t}\Big)^2\Big)\Big)\\
        &=\frac{1}{2^{2\gamma+1}\cdot \Gamma(\gamma+1)}\cdot (rs)^{\gamma-n/2+1}\cdot t^{-(\gamma+1)}\cdot e^{-\frac{r^2}{4t}}\cdot\big(1+O(t^{-1})\big)
    \end{align*}
\end{proof}

\indent Let $\gamma=\gamma_k$ for some $k$, and define a functional 
\begin{equation*}\Psi_{\gamma}(\phi)=\int_0^{\infty}\phi_{\gamma}(s)\cdot s^{\gamma+n/2}ds=\int_{C(M)}\phi(s,y)\cdot v_k(y)\cdot  s^{\gamma-n/2+1}dV_g(s,y),
\end{equation*}
which is well-defined for the set of functions $\phi\in L^2(C(M))$ with compact support. 
\begin{coro}\label{wasympt}
    Let $R>0$. Let $\phi\in L^2(C(M))$ be a compactly supported initial condition. If $\Psi_{\gamma}(\phi)\neq 0$, then on the set $\{r\leq R\cdot t^{1/2}\}$, the function $w_{\gamma}(r,t)$ defined by (\ref{radialsolns}) satisfies 
    \begin{equation*}
        w_{\gamma}(r,t)=\frac{\Psi_{\gamma}(\phi)}{2^{2\gamma+1}\cdot\Gamma(\gamma+1)}\cdot r^{\gamma-n/2+1}\cdot t^{-(\gamma+1)}\cdot e^{-\frac{r^2}{4t}}\cdot\big(1+O(t^{-1})\big).
    \end{equation*}
    as $t\to\infty$. If $\Psi_{\gamma}(\phi)=0$, then on the same set, we have \[w_{\gamma}(r,t)=O(r^{\gamma-n/2+1}\cdot t^{-(\gamma+2)}).\]
\end{coro}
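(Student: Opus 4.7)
The plan is to substitute the asymptotic expansion from Lemma \ref{pgammaasymptotic} directly into the definition (\ref{radialsolns}) of $w_\gamma(r,t)$, then recognize the resulting $s$-integral as $\Psi_\gamma(\phi)$. Since $\phi$ is compactly supported, say in $[0,S]\times M$, the radial projection $\phi_\gamma(s)=\int_M\phi(s,y)v_k(y)dy$ is supported in $[0,S]$, so on the set $\{r\leq R t^{1/2}\}$ the integrand $P_\gamma(r,s,t)\phi_\gamma(s)s^{n-1}$ vanishes for $s>S$. This allows me to apply Lemma \ref{pgammaasymptotic} uniformly throughout the effective range of integration.

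After substituting the asymptotic for $P_\gamma$, I would pull the factors $r^{\gamma-n/2+1}$, $t^{-(\gamma+1)}$, $e^{-r^2/(4t)}$, and the constant $1/(2^{2\gamma+1}\Gamma(\gamma+1))$ out of the integral. Combining the $s^{\gamma-n/2+1}$ from the lemma with the volume factor $s^{n-1}$ gives $s^{\gamma+n/2}$, so the remaining integral is
\[
\int_0^S s^{\gamma+n/2}\phi_\gamma(s)\,ds=\int_0^\infty s^{\gamma+n/2}\phi_\gamma(s)\,ds=\Psi_\gamma(\phi),
\]
which is precisely the functional in the statement. Since $\gamma\geq (n-2)/2>0$ when $n\geq 3$, the integrand $s^{\gamma+n/2}\phi_\gamma(s)$ is bounded near $s=0$, so there is no integrability concern at the cone point.

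The only subtlety to check is that the $O(t^{-1})$ remainder from Lemma \ref{pgammaasymptotic} survives integration uniformly. This is straightforward: the implicit constant in that remainder arises from the bounds $(rs/(2t))^2=O(t^{-1})$ and $s^2/(4t)=O(t^{-1})$, both of which hold uniformly on the rectangle $\{r\leq R t^{1/2},\ s\leq S\}$ with constants depending only on $R$ and $S$. Consequently the factor $(1+O(t^{-1}))$ can be pulled outside the $s$-integral with a constant depending on $R$, $S$, and $\|\phi_\gamma\|_{L^\infty}$. The corollary is thus a direct unpacking of Lemma \ref{pgammaasymptotic}, and I do not anticipate any serious obstacle beyond this uniformity bookkeeping.
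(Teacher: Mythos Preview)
Your approach is exactly the paper's: choose $S$ so that $\operatorname{supp}\phi\subset[0,S]\times M$, apply Lemma \ref{pgammaasymptotic} on $\{r\le Rt^{1/2},\,s\le S\}$, and identify the resulting $s$-integral as $\Psi_\gamma(\phi)$. The only quibble is that $\phi\in L^2$ need not give $\phi_\gamma\in L^\infty$; the correct dependence of the error constant is on $\int_0^S s^{\gamma+n/2}\,|\phi_\gamma(s)|\,ds$, which is finite by Cauchy--Schwarz since $s^{\gamma-n/2+1}\in L^2([0,S],s^{n-1}ds)$.
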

\begin{proof}
    Suppose that the support of $\phi$ is contained in $[0,S]\times M$ for some $S>0$. The result follows from applying Lemma \ref{pgammaasymptotic} with this value of $S$.
\end{proof}

We also need to understand the limiting value of the radial derivative of $w_{\gamma}$. To do so, we again study the function $\Pg$. Using the formula (\ref{derivativebessel}) of the modified Bessel function, we have
\begin{equation}\label{pderiv}
    \partial_r\Pg(r,s,t)=\Big[\Big(\gamma-\frac{n-2}{2}\Big)\cdot\frac1r-\frac{r}{2t}\Big]\cdot\Pg(r,s,t)+\frac{s}{2t}\cdot P_{\gamma+1}(r,s,t).
\end{equation}
Furthermore, using the relation (\ref{quotientofbessels}), on sets of the form $\{s\leq S\}$, we have 
\begin{equation*}
    \partial_r\Pg(r,s,t)=\Big[\Big(\gamma-\frac{n-2}{2}\Big)\cdot\frac1r-\frac{r}{2t}+r\cdot O(t^{-2})\Big]\cdot\Pg(r,s,t)
\end{equation*}
The following is proved similarly to Corollary \ref{wasympt}
\begin{lem}\label{wderivative}
    Let $R>0$, and suppose that the initial condition $\phi\in L^2(C(M))$ is compactly supported. If $\Psi_{\gamma}(\phi)\neq 0$, then on $\{r\leq R\cdot t^{1/2}\}$, we have
    \begin{align*}
        \partial_rw_{\gamma}(r,t)=\Big[\Big(\gamma-\frac{n-2}{2}\Big)&\cdot\frac1r-\frac{r}{2t}+r\cdot O(t^{-2})\Big]\\&\cdot\frac{\Psi_{\gamma}(\phi)}{2^{2\gamma+1}\cdot\Gamma(\gamma+1)}\cdot r^{\gamma-n/2+1}\cdot t^{-(\gamma+1)}\cdot e^{-\frac{r^2}{4t}}\cdot\big(1+O(t^{-1})\big)
    \end{align*}
    as $t\to\infty$. If $\Psi_{\gamma}(\phi)=0$, then on the same set, we have 
    \[\partial_rw_{\gamma}(r,t)=O(r^{\gamma-n/2}\cdot t^{-(\gamma+2)}).\]
\end{lem}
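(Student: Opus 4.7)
The plan is to differentiate $w_\gamma(r,t) = \int_0^\infty P_\gamma(r,s,t)\,\phi_\gamma(s)\,s^{n-1}\,ds$ under the integral sign and apply the asymptotic analysis used in Lemma \ref{pgammaasymptotic} and Corollary \ref{wasympt} to the resulting pieces. Since $\phi$ is compactly supported, say in $[0,S]\times M$, and $P_\gamma$ is smooth in $r$, differentiation under the integral is justified by standard arguments and yields $\partial_r w_\gamma(r,t) = \int_0^\infty \partial_r P_\gamma(r,s,t)\,\phi_\gamma(s)\,s^{n-1}\,ds$.

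First I would substitute formula (\ref{pderiv}) for $\partial_r P_\gamma$, splitting the integral into a multiple of $w_\gamma(r,t)$ and a term involving $P_{\gamma+1}$. The $P_\gamma$ part contributes $[(\gamma - (n-2)/2)/r - r/(2t)]\,w_\gamma(r,t)$, which, after applying the asymptotic of Corollary \ref{wasympt} to $w_\gamma(r,t)$, matches the first two terms of the bracket on the right-hand side of the claimed formula. For the remaining term $\frac{1}{2t}\int_0^\infty s\,P_{\gamma+1}(r,s,t)\,\phi_\gamma(s)\,s^{n-1}\,ds$, I would apply Lemma \ref{pgammaasymptotic} with index $\gamma+1$ in place of $\gamma$ on the set $\{r\leq Rt^{1/2},\, s\leq S\}$. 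This produces an expression of the form $C(\phi)\,r^{\gamma - n/2 + 2}\,t^{-(\gamma+3)}\,e^{-r^2/(4t)}\,(1+O(t^{-1}))$, where $C(\phi)$ is a finite constant depending on an integral of $\phi_\gamma$ against $s^{\gamma + n/2 + 2}$.

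Comparing this second contribution to the asymptotic expression for $w_\gamma(r,t)$ from Corollary \ref{wasympt}, the extra factor is exactly $r \cdot O(t^{-2})$ times the main factor $r^{\gamma - n/2 + 1}\,t^{-(\gamma+1)}\,e^{-r^2/(4t)}$, where the implicit constant depends on $\phi$ but not on $r$ or $t$. This fits precisely into the $r \cdot O(t^{-2})$ slot inside the bracket of the claim, once the extra multiplicative constant is absorbed into the $O$-notation. Adding the two contributions and factoring out the common asymptotic expression $\frac{\Psi_\gamma(\phi)}{2^{2\gamma+1}\Gamma(\gamma+1)}\,r^{\gamma - n/2 + 1}\,t^{-(\gamma+1)}\,e^{-r^2/(4t)}(1+O(t^{-1}))$ then yields the asserted formula. (Alternatively, one can avoid invoking Lemma \ref{pgammaasymptotic} at index $\gamma+1$ by using the identity (\ref{quotientofbessels}) to rewrite $P_{\gamma+1}(r,s,t) = \frac{rs}{4t(\gamma+1)}\,P_\gamma(r,s,t)\,(1+O(t^{-1}))$ directly, which reduces the second piece to an integral already controlled in Corollary \ref{wasympt}.)

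The only real obstacle is the bookkeeping of error terms: one must verify that the $O(t^{-1})$ constants produced by Lemma \ref{pgammaasymptotic} are uniform in $r\in[0,Rt^{1/2}]$ and $s\in[0,S]$. This uniformity is immediate from the proof of Lemma \ref{pgammaasymptotic}, since the error there depends on $r$ and $s$ only through the combination $rs/(2t) = O(t^{-1/2})$. No analytical ideas beyond those already used in Corollary \ref{wasympt} are required.
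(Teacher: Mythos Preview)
Your proposal is correct and matches the paper's approach: the paper derives the pointwise identity $\partial_r P_\gamma = \bigl[(\gamma-\tfrac{n-2}{2})\tfrac{1}{r}-\tfrac{r}{2t}+r\cdot O(t^{-2})\bigr]P_\gamma$ from (\ref{pderiv}) via (\ref{quotientofbessels}) and then integrates exactly as in Corollary~\ref{wasympt}. Your parenthetical alternative is precisely this route; your primary route (handling the $P_{\gamma+1}$ piece by Lemma~\ref{pgammaasymptotic} at index $\gamma+1$) is an equivalent reorganization of the same computation.
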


\section{Tail estimates}\label{tailsection}
The asymptotics provided in Section \ref{limitingsection} include error terms that depend on $\gamma$ and the initial condition in non-trivial ways, so we cannot easily apply these estimates to the entire series defining the heat kernel. Here we make weaker estimates that show that the tail of the series and its radial derivative is negligible on the sets we consider. We begin by recalling some well-known estimates on the eigenvalues and eigenfunctions of the fiber $(M,h)$:
\begin{lem}\label{evalestimates}
    There exists a constant $C>0$ depending only on $(M,h)$ such that each of the following estimates holds:
    \begin{equation}\label{weyl}
        \frac1C\cdot k^{\frac{2}{n-1}}\leq \nu_k\leq C\cdot k^{\frac{2}{n-1}}\;\;\text{for all}\;\;k\geq 2
    \end{equation}
    \begin{equation}\label{gammaweyl}
        \frac1C\cdot k^{\frac{1}{n-1}}\leq \gamma_k\leq C\cdot k^{\frac{1}{n-1}}\;\;\text{for all}\;\;k\geq 1
    \end{equation}
    \begin{equation}\label{linfty}
        \|v_k\|_{L^{\infty}(M)}\leq C\cdot k^{\frac12\cdot \frac{n-2}{n-1}}\;\;\text{for all}\;\;k\geq 1.
    \end{equation}
\end{lem}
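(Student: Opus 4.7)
The plan is to prove the three estimates (\ref{weyl}), (\ref{gammaweyl}), (\ref{linfty}) in order, each being a classical spectral fact about closed Riemannian manifolds combined with elementary manipulation. None of the three is genuinely difficult; the lemma is simply collecting standard bounds for later use.

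For (\ref{weyl}), I would invoke \emph{Weyl's law} on the closed $(n-1)$-dimensional manifold $(M,h)$, which gives the precise asymptotic
\[
\nu_k \sim \frac{(2\pi)^2}{(\omega_{n-1}\,\mathrm{Vol}(M,h))^{2/(n-1)}}\, k^{2/(n-1)} \quad\text{as } k \to \infty.
\]
This immediately yields two-sided bounds of the form $\tfrac{1}{C}k^{2/(n-1)} \leq \nu_k \leq C k^{2/(n-1)}$ for all $k$ beyond some threshold $k_0$. For the finitely many remaining indices $2 \leq k \leq k_0$, one uses that $\nu_k > 0$ (since $(M,h)$ is closed and connected, so $\nu_1 = 0$ is a simple eigenvalue) and enlarges $C$ so that both inequalities hold for every $k \geq 2$.

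The estimate (\ref{gammaweyl}) follows directly from (\ref{weyl}) together with the definition $\gamma_k = \sqrt{\tfrac14(n-2)^2 + \nu_k}$. For $k \geq 2$, $\nu_k \to \infty$ dominates the constant term, so $\gamma_k^2 \asymp \nu_k \asymp k^{2/(n-1)}$, and taking square roots gives the desired estimate. For $k=1$, one has $\gamma_1 = (n-2)/2 > 0$ because the hypothesis $n-1 \geq 2$ is in force, and one enlarges $C$ once more to cover this single case.

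For (\ref{linfty}), I would invoke the classical \emph{H\"ormander sup-norm bound} for Laplace eigenfunctions on a closed $d$-dimensional manifold: every $L^2$-normalized eigenfunction $v$ with eigenvalue $\nu$ satisfies $\|v\|_{L^\infty} \leq C\, \nu^{(d-1)/4}$. Applied with $d = n-1$ and combined with the upper bound from (\ref{weyl}), this gives
\[
\|v_k\|_{L^\infty(M)} \leq C\, \nu_k^{(n-2)/4} \leq C\, k^{\frac{1}{2}\cdot\frac{n-2}{n-1}},
\]
as claimed. The only aspect requiring any care is the small-$k$ regime in (\ref{weyl}) and (\ref{gammaweyl}), where the asymptotics have not yet taken effect; this is handled trivially by absorbing the finitely many exceptional indices into $C$. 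Since Weyl's law and H\"ormander's bound do all the work, there is no real obstacle in the argument.
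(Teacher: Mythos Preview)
Your proposal is correct and follows essentially the same approach as the paper: Weyl's law for (\ref{weyl}), the definition of $\gamma_k$ together with $\gamma_1>0$ for (\ref{gammaweyl}), and the H\"ormander/semiclassical sup-norm bound $\|v_k\|_{L^\infty}\leq C\nu_k^{(n-2)/4}$ combined with (\ref{weyl}) for (\ref{linfty}). The only cosmetic difference is that you spell out the handling of the finitely many small indices, which the paper leaves implicit.
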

\begin{proof}
    Equation (\ref{weyl}) follows immediately from Weyl's law (see Theorem 3.3.4 of \cite{levitin}). Equation (\ref{gammaweyl}) follows from Equation (\ref{weyl}) and the fact that $\gamma_1>0$. The eigenfunction bound follows from the semiclassical estimate $\|v_k\|_{L^{\infty}(M)}\leq C\cdot \nu_k^{\frac{n-2}{4}}$ (see Corollary 5.1.2 of \cite{sogge}) combined with Equation (\ref{weyl}).
\end{proof}

To bound the tails, we find upper bounds on the absolute value of each term in the series (\ref{fullseries}). The following lemma allows us to sum these estimates. 

\begin{lem}\label{sumconverges}
    Let $z\geq 0$ and $\alpha,\beta,c,C>0$. Then
    \[\sum_{k=1}^{\infty}\frac{k^{\beta}\cdot z^{Ck^{\alpha}}}{\Gamma(ck^{\alpha})}<\infty.\]
\end{lem}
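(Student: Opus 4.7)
The intuition is that the denominator $\Gamma(ck^{\alpha})$ grows super-exponentially in $k^{\alpha}$, while the numerator grows at most exponentially in $k^{\alpha}$ (and actually decays when $z<1$), while $k^{\beta}$ is merely polynomial. So each term decays faster than any negative power of $k$, and the plan is to conclude by comparison with $\sum 1/k^{2}$.

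First I would dispatch the trivial cases: if $z=0$ then only the $k=1$ term is possibly nonzero, and if $0<z\le 1$ then $z^{Ck^{\alpha}}\le 1$, so the series is bounded by $\sum_{k\ge 1} k^{\beta}/\Gamma(ck^{\alpha})$, which converges for the same reason as the main case. In the remaining case $z>1$, I would invoke Stirling's asymptotic $\log\Gamma(x)=x\log x-x+O(\log x)$ with $x=ck^{\alpha}$ to get
\[
\log\Gamma(ck^{\alpha})=c\alpha\,k^{\alpha}\log k+(c\log c-c)\,k^{\alpha}+O(\log k),
\]
whereas the logarithm of the numerator is $\beta\log k+(C\log z)\,k^{\alpha}=O(k^{\alpha})$.

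Subtracting, the logarithm of the $k$th term equals $-c\alpha\,k^{\alpha}\log k+O(k^{\alpha})$, which tends to $-\infty$ as $k\to\infty$ since $\alpha,c>0$. In particular, for all sufficiently large $k$, this expression is less than $-2\log k$, so the $k$th term is bounded above by $k^{-2}$. Summability then follows by direct comparison with $\sum_{k\ge 1} k^{-2}$.

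I do not anticipate any genuine obstacle: the lemma is essentially a quantitative version of the fact that factorial-type growth dominates exponential growth. The only mild care required is to note that regardless of whether $\alpha\geq 1$ or $\alpha<1$, the term $c\alpha\,k^{\alpha}\log k$ produced by Stirling still dominates both $O(k^{\alpha})$ and $O(\log k)$, which is exactly what allows the comparison with $1/k^{2}$ to go through.
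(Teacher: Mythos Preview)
Your proposal is correct and follows essentially the same route as the paper: both arguments apply Stirling's formula to $\Gamma(ck^{\alpha})$, observe that the dominant term $c\alpha\,k^{\alpha}\log k$ in the log of the denominator swamps the $O(k^{\alpha})$ contribution from $z^{Ck^{\alpha}}$ and the polynomial $k^{\beta}$, and then compare with $\sum k^{-2}$. The only cosmetic differences are that the paper works multiplicatively (isolating a factor $b_k\sim k^{c_2k^{\alpha}}$) rather than taking logs, and does not split off the case $z\le 1$ (its threshold condition $k\ge z^{2C/c_2}$ is vacuous there); your remark that for $z=0$ ``only the $k=1$ term is possibly nonzero'' is slightly off since $C\cdot 1^{\alpha}=C>0$ makes every term vanish, but this is harmless.
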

\begin{proof}
    Let \[b_k=\Big(\frac{c^{\alpha}}{e}\Big)^{ck^{\alpha}}\cdot k^{\alpha ck^{\alpha}}.\]
    Then by Stirling's approximation formula, there exists $c_1>0$ such that
    \begin{equation}
        \Gamma(ck^{\alpha})\geq c_1\cdot k^{-\alpha/2}\cdot b_k.
    \end{equation}
    Noting that 
    \[\ln(b_k)=ck^{\alpha}\ln\Big(\frac{c^{\alpha}}{e}\Big)+\alpha ck^{\alpha}\ln(k)\geq c_2k^{\alpha}\ln(k)\] for $c_2$ sufficiently small and $k$ sufficiently large, we have \[b_k\geq e^{c_2k^{\alpha}\ln(k)}=k^{c_2k^{\alpha}}.\] Thus, for $k$ sufficiently large, 
    \[\frac{k^{\beta}\cdot z^{Ck^{\alpha}}}{\Gamma(ck^{\alpha})}\leq \frac{1}{c_1}\cdot z^{Ck^{\alpha}}\cdot k^{\beta+\alpha/2-c_2k^{\alpha}}.\] Now if we further suppose that $k\geq z^{2C/c_2}$, then $z^{Ck^{\alpha}}\leq k^{c_2k^{\alpha}/2}$, so we get \[\frac{k^{\beta}\cdot z^{Ck^{\alpha}}}{\Gamma(ck^{\alpha})}\leq \frac{1}{c_1}\cdot k^{\beta+\alpha/2-c_2k^{\alpha}/2}.\] Finally, for $k$ sufficiently large, $\beta-\alpha/2-c_2k^{\alpha}/2<-2$, so for such $k$, we have that each term in the series is bounded above by $k^{-2}/c_1$, so the series converges. 
\end{proof}

Applying Lemma \ref{estimatebessel}, we have 
\begin{equation}\label{Pgupperbound}
    \Pg(r,s,t)\leq \frac{1}{\Gamma(\gamma)}\cdot (rs)^{-n/2}\cdot e^{-\frac{(r-s)^2}{4t}}\cdot \Big(\frac{rs}{2t}\Big)^{\gamma+1}
\end{equation}
for all $r,s\geq 0$ and $t>0$.

We can now bound the tail of the series (\ref{fullseries}). Let $K$ be some positive integer, and define \[U_K(r,x,t)=\sum_{k=K}^{\infty}w_{\gamma_k}(r,x,t)\cdot v_k(x).\]
\begin{prop}\label{normaltail}
    Suppose that the initial condition $\phi\in L^2(C(M))$ has compact support. Let $R>0$. Then there exists $C_1>0$ such that on $\{r\leq R\cdot t^{1/2}, t\geq 1\}$, 
    \begin{equation*}
        |U_K(r,x,t)|\leq C_1\cdot r^{\gamma_K-n/2+1}\cdot t^{-(\gamma_K+1)}.
    \end{equation*}
\end{prop}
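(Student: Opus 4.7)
The plan is to estimate each summand of $U_K$ via the global Bessel bound (\ref{Pgupperbound}) and then to sum over $k\geq K$ using Lemma \ref{sumconverges}, after factoring out the target quantity $r^{\gamma_K-n/2+1}\cdot t^{-(\gamma_K+1)}$.

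First I would fix $S>0$ so that $\text{supp}(\phi)\subseteq [0,S]\times M$, and bound $\phi_{\gamma_k}(s)$ pointwise. Since $\phi_{\gamma_k}(s)$ is the $L^2(M)$ pairing of $\phi(s,\cdot)$ with $v_k$, the estimate $|\phi_{\gamma_k}(s)|\leq \|v_k\|_{L^{\infty}(M)}\cdot\|\phi(s,\cdot)\|_{L^{1}(M)}$ together with Lemma \ref{evalestimates} yields a factor of $k^{(n-2)/(2(n-1))}$. Multiplying by $|v_k(x)|\leq C\cdot k^{(n-2)/(2(n-1))}$ in the full series then contributes a polynomial factor $k^{(n-2)/(n-1)}$. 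Inserting (\ref{Pgupperbound}) into (\ref{radialsolns}), replacing the Gaussian factor by $1$, and computing the $s$-integral over $[0,S]$ via $s\leq S$ would give a bound of the form
\begin{equation*}
    |w_{\gamma_k}(r,t)|\cdot|v_k(x)|\;\leq\; \frac{D_{\phi}\cdot k^{(n-2)/(n-1)}\cdot S^{\gamma_k+n/2}\cdot r^{\gamma_k-n/2+1}}{\Gamma(\gamma_k)\cdot (2t)^{\gamma_k+1}},
\end{equation*}
where $D_{\phi}$ depends only on $\phi$ and $S$.

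Next I would factor out the desired $r^{\gamma_K-n/2+1}\cdot t^{-(\gamma_K+1)}$ from each term. The leftover prefactor in the $k$th summand is exactly $(r/t)^{\gamma_k-\gamma_K}\cdot (S/2)^{\gamma_k}\cdot(S/2)^{-\gamma_K}$ up to constants. On the region $\{r\leq R\cdot t^{1/2},\,t\geq 1\}$ one has $r/t\leq R\cdot t^{-1/2}\leq R$, so writing $z:=\max(R,1)\cdot S/2$ we obtain
\begin{equation*}
    |U_K(r,x,t)|\;\leq\; C'\cdot r^{\gamma_K-n/2+1}\cdot t^{-(\gamma_K+1)}\cdot\sum_{k=K}^{\infty}\frac{k^{(n-2)/(n-1)}\cdot z^{\gamma_k}}{\Gamma(\gamma_k)}.
\end{equation*}
Finally, using the Weyl-type bounds $c\cdot k^{1/(n-1)}\leq \gamma_k\leq C\cdot k^{1/(n-1)}$ from Lemma \ref{evalestimates} (and the fact that $\Gamma$ is eventually monotone, so $\Gamma(\gamma_k)\geq \Gamma(c\cdot k^{1/(n-1)})$ for large $k$), the series above is dominated by one of the form in Lemma \ref{sumconverges} with $\alpha=1/(n-1)$ and $\beta=(n-2)/(n-1)$, hence is finite. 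This finite sum supplies $C_1$.

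The main potential obstacle is the possibility that the residual factor $(r/t)^{\gamma_k-\gamma_K}$ grows with $k$; but this is precisely where the constraints $r\leq R\cdot t^{1/2}$ and $t\geq 1$ are used to force $r/t\leq R$, so the growth is at worst geometric in $\gamma_k$ and is trivially absorbed by the super-exponential decay contributed by $1/\Gamma(\gamma_k)$. Everything else is bookkeeping: controlling the polynomial $L^\infty$-blowup of the $v_k$'s via the semiclassical bound and verifying that Lemma \ref{sumconverges} applies with the chosen parameters.
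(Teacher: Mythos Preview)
Your approach is essentially the paper's: bound each $w_{\gamma_k}$ via the global estimate (\ref{Pgupperbound}), factor out $r^{\gamma_K-n/2+1}t^{-(\gamma_K+1)}$, control the residual $(r/t)^{\gamma_k-\gamma_K}$ on the given region, and sum with Lemma \ref{sumconverges} using the bounds of Lemma \ref{evalestimates}. The paper differs only in that it bounds $\int_0^S|\phi_{\gamma_k}(s)|\,s^{n-1}ds$ \emph{uniformly in $k$} via Cauchy--Schwarz and the $L^2$-orthonormality of the $v_k$ (yielding $\leq S^{n/2}\|\phi\|_{L^2}/\sqrt{n}$), whereas you estimate $|\phi_{\gamma_k}(s)|$ pointwise through $\|v_k\|_{L^\infty}$ and pick up an extra, harmless, polynomial factor in $k$. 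One bookkeeping caution: as written, your constant $D_\phi$ is effectively $\int_0^S\|\phi(s,\cdot)\|_{L^1(M)}\,ds$ with plain $ds$, which can be infinite for $\phi\in L^2(C(M))$ whose support reaches the cone point; the easy fix is to bound only $s^{\gamma_k-n/2+1}\leq S^{\gamma_k-n/2+1}$ (this exponent is nonnegative for every $k$) and keep the weight $s^{n-1}$ in the remaining integral, so that you end up with $\|\phi\|_{L^1(C(M))}<\infty$ --- or simply use the paper's $L^2$ argument instead.
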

\begin{proof}
    We first find an upper bound on each $w_{\gamma}=w_{\gamma_k}$ using (\ref{Pgupperbound}). Suppose that the support of $\phi$ is contained in $[0,S]\times M$ for some $S>0$. Then by the definition of $w_{\gamma}$, we have 
    \begin{align*}
        |w_{\gamma}(r,t)|&\leq \frac{1}{\Gamma(\gamma)}\cdot (rS)^{\gamma-n/2+1}\cdot t^{-(\gamma+1)}\cdot\int_0^S|\phi_{\gamma}(s)|\cdot s^{n-1}ds\\&\leq \frac{S^{n/2}\|\phi\|_{L^2(C(M))}}{\sqrt{n}}\cdot \frac{(rS)^{\gamma-n/2+1}}{\Gamma(\gamma)}\cdot t^{-(\gamma+1)}.
    \end{align*}
    Let $C_1=\frac{1}{\sqrt{n}}\cdot S^{\gamma_K+1}\cdot \|\phi\|_{L^2(C(M))}$. Suppose without loss of generality that $R,S>1$. Summing over $k$ and taking $t\geq 1$ gives, by applying the estimate (\ref{linfty}),
    \begin{align*}
        |U_K(r,x,t)|&\leq C_1\cdot r^{\gamma_K-n/2+1}\cdot t^{-(\gamma_K+1)}\cdot\sum_{k=K}^{\infty}\frac{(rS)^{\gamma_k-\gamma_K}}{\Gamma(\gamma_k)}\cdot t^{\gamma_K-\gamma_k}\cdot \|v_k\|_{L^{\infty}(C(M))}\\
        &\leq C_1\cdot r^{\gamma_K-n/2+1}\cdot t^{-(\gamma_K+1)}\cdot \sum_{k=K}^{\infty}\frac{(RS)^{\gamma_k-\gamma_K}}{\Gamma(\gamma_k)}\cdot t^{(\gamma_K-\gamma_k)/2}\cdot \|v_k\|_{L^{\infty}(C(M))}\\
        &\leq C_1\cdot r^{\gamma_K-n/2+1}\cdot t^{-(\gamma_K+1)}\cdot \sum_{k=K}^{\infty}\frac{(RS)^{Ck^{\frac{1}{n-1}}}}{\Gamma\big(\frac{1}{C}\cdot k^{\frac{1}{n-1}}\big)}\cdot C\cdot k^{\frac12\cdot \frac{n-2}{n-1}}.
    \end{align*}
    By Lemma \ref{sumconverges}, the series in the last inequality converges, and the result follows. 
\end{proof}

We perform a similar analysis on the radial derivative of $U$. Using the inequality (\ref{besselmono}) and Equation (\ref{pderiv}), we have the following on sets of the form $\{r\leq R\cdot t^{1/2},s\leq S\}$:
\[\partial_r\Pg(r,s,t)\leq \Big(\frac{\gamma}{r}+\frac{R}{2t^{1/2}}+\frac{S}{2t}\Big)P_{\gamma}(r,s,t).\] Thus, there exists $T>0$ depending only on $R$ and $S$ (in particular, not on $\gamma$) such that for $t\geq T$, on the set $\{r\leq R\cdot t^{1/2},s\leq S\}$, we have 
\begin{equation*}
    \partial_r\Pg(r,s,t)\leq \Big(\frac{\gamma}{r}+\frac{R}{t^{1/2}}\Big)\Pg(r,s,t).
\end{equation*}
Then a similar computation to that performed in the proof of Proposition \ref{normaltail} gives
\begin{prop}\label{tailderivative}
    Suppose that the initial condition $\phi\in L^2(C(M))$ has compact support. Let $R>0$. Then there exists $C_1>0$ and $T>0$ such that on $\{r\leq R\cdot t^{1/2},t\geq T\}$, 
    \begin{equation*}
        |\partial_rU_K(r,x,t)|\leq C_1\cdot r^{\gamma-n/2}\cdot t^{-(\gamma_K+1)}.
    \end{equation*}
\end{prop}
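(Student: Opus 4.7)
The plan is to mimic the proof of Proposition \ref{normaltail}, replacing the pointwise upper bound on $P_{\gamma_k}$ with the pointwise upper bound on $\partial_r P_{\gamma_k}$ that is established in the paragraph immediately preceding the proposition statement. Concretely, on the set $\{r\leq R\cdot t^{1/2},\,s\leq S\}$ and for $t\geq T$ (with $T$ depending only on $R$ and $S$, hence independent of $\gamma_k$), we have
\[
\partial_r P_{\gamma_k}(r,s,t)\leq \Bigl(\frac{\gamma_k}{r}+\frac{R}{t^{1/2}}\Bigr)P_{\gamma_k}(r,s,t),
\]
and we already know from inequality (\ref{Pgupperbound}) the bound
\[
P_{\gamma_k}(r,s,t)\leq \frac{1}{\Gamma(\gamma_k)}\cdot (rs)^{-n/2}\cdot e^{-\frac{(r-s)^2}{4t}}\cdot\Bigl(\frac{rs}{2t}\Bigr)^{\gamma_k+1}.
\]

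The first step is to integrate this against $\phi_{\gamma_k}(s)\,s^{n-1}$ on $[0,S]$, where $S$ is chosen so that $\text{supp}(\phi)\subseteq [0,S]\times M$. Arguing exactly as in the proof of Proposition \ref{normaltail}, and using Cauchy--Schwarz on the $s$-integral, this yields an estimate of the form
\[
|\partial_r w_{\gamma_k}(r,t)|\;\leq\; C\Bigl(\frac{\gamma_k}{r}+\frac{R}{t^{1/2}}\Bigr)\cdot\frac{(rS)^{\gamma_k-n/2+1}}{\Gamma(\gamma_k)}\cdot t^{-(\gamma_k+1)}\cdot\|\phi\|_{L^2(C(M))},
\]
where $C$ depends only on $n$ and $S$. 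The key observation that produces the exponent $r^{\gamma_K-n/2}$ in the conclusion is that $\tfrac{\gamma_k}{r}\cdot r^{\gamma_k-n/2+1}=\gamma_k\,r^{\gamma_k-n/2}$, while on $\{r\leq R\cdot t^{1/2}\}$ we also have $\tfrac{R}{t^{1/2}}\cdot r^{\gamma_k-n/2+1}\leq R^2\,r^{\gamma_k-n/2}$, so both terms contribute the same $r$-exponent up to a factor that is polynomial in $\gamma_k$.

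The second step is to multiply by $\|v_k\|_{L^\infty(M)}$, sum over $k\geq K$, and factor out $r^{\gamma_K-n/2}\cdot t^{-(\gamma_K+1)}$ as in the proof of Proposition \ref{normaltail}. After doing so, the remaining series has the form
\[
\sum_{k=K}^{\infty}\gamma_k\cdot\frac{(RS)^{\gamma_k-\gamma_K}}{\Gamma(\gamma_k)}\cdot t^{(\gamma_K-\gamma_k)/2}\cdot\|v_k\|_{L^\infty(M)},
\]
up to bounded factors, where we used $(rS)^{\gamma_k-\gamma_K}\leq (RS\cdot t^{1/2})^{\gamma_k-\gamma_K}$ to absorb the powers of $r$ and split one half of the $t^{\gamma_K-\gamma_k}$ factor to cancel $t^{(\gamma_k-\gamma_K)/2}$.

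The final step is to apply the eigenvalue estimates of Lemma \ref{evalestimates} (so $\gamma_k$ and $\|v_k\|_{L^\infty}$ grow polynomially in $k$, while $\gamma_k\geq c\,k^{1/(n-1)}$) and invoke Lemma \ref{sumconverges} with $\alpha=1/(n-1)$ and suitable $\beta,c,C,z$ to conclude that the series converges to a finite constant. The only subtlety, and essentially the only genuine obstacle, is verifying that the extra polynomial-in-$k$ factor $\gamma_k$ (from the $\gamma_k/r$ term in $\partial_r P_{\gamma_k}$) can be absorbed into the $k^\beta$ factor of Lemma \ref{sumconverges}; but since $\gamma_k\leq C\,k^{1/(n-1)}$ by (\ref{gammaweyl}), this simply shifts $\beta$ by a bounded amount and does not affect convergence. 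Choosing $T$ large enough that the preliminary estimate $\partial_r P_{\gamma_k}\leq\bigl(\gamma_k/r+R/t^{1/2}\bigr)P_{\gamma_k}$ is valid then yields the proposition.
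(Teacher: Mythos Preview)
Your proposal is correct and follows essentially the same approach as the paper: the paper itself establishes the bound $\partial_r P_{\gamma}\leq(\gamma/r+R/t^{1/2})P_{\gamma}$ in the paragraph before the proposition and then simply says ``a similar computation to that performed in the proof of Proposition \ref{normaltail} gives'' the result, and you have faithfully carried out that computation, including the key step of absorbing the extra factor $\gamma_k$ into the $k^{\beta}$ of Lemma \ref{sumconverges}. One small point worth making explicit: the preparatory inequality is stated in the paper only as an upper bound on $\partial_r P_{\gamma}$, but since you need $|\partial_r w_{\gamma_k}|$ (because $\phi_{\gamma_k}$ need not be nonnegative), you should note that the same triangle-inequality argument applied to Equation (\ref{pderiv}) yields $|\partial_r P_{\gamma}|\leq(\gamma/r+R/t^{1/2})P_{\gamma}$ as well.
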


\section{Proof of Theorem \ref{mainthmcones}}\label{mainproofsection}
We are now set to prove Theorem \ref{mainthmcones}. We suppose throughout the section that $u$ is a solution to the heat equation (\ref{heateqn}) on $C(M)$ with an initial condition $\phi$ satisfying Hypotheses \ref{phihyp} and \ref{hyp2}. Let $p$ denote the cone point on $C(M)$. Recall the definition of $H(t)$ from Section \ref{intro}.\\
\indent As mentioned in Section \ref{intro}, we find even more precise information on the location of $H(t)$ than is stated in Theorem \ref{mainthmcones}. For each $k\geq 1$, let 
\begin{equation*}
    G_k(x):=\frac{\Psi_{\gamma_k}(\phi)}{2^{2\gamma_k+1}\cdot\Gamma(\gamma_k+1)}\cdot v_k(x).
\end{equation*}
Note that since $v_1$ is a constant function equal to $\text{Vol}(M,h)^{-1/2}$, the function $G_1(x)$ is also constant. Suppose that $\nu_2=...=\nu_{K-1}<\nu_K$ for some $K\geq 3$, and let \[J(x):=\sum_{k=2}^{K-1}G_k(x).\] Note that $J(x)$ not being identically equal to $0$ is equivalent to Hypothesis \ref{hyp2}. Let \[m:=\max_{x\in M}J(x)\;\;\text{and}\;\;R_{\infty}:=\Bigg(\frac{2}{G_1}\cdot\Big(\gamma_2-\frac{n-2}{2}\Big)\cdot m\Bigg)^{\displaystyle1/(n/2-\gamma_2+1)}.\] Let \[\alpha:=\frac{n/2-\gamma_2}{n/2-\gamma_2+1}.\] Finally, for any $\epsilon>0$, let 
\begin{equation}\label{ueps}
    U_{\epsilon}:=\{x\in M\mid J(x)\geq m-\epsilon\}.
\end{equation} Then we have 
\begin{thm}\label{moreprecise}
    Under the same hypotheses as Theorem \ref{mainthmcones}, we have 
    \begin{enumerate}
        \item If $\nu_2\geq 2n$, then there exists $T>0$ such that for all $t\geq T$, \[H(t)=\{p\}.\]
        \item If $\nu_2<2n$ and $J(x)$ is not identically equal to $0$, then for any $\epsilon>0$, there exists $T>0$ such that for all $t\geq T$, 
        \[H(t)\subseteq \Big[(R_{\infty}-\epsilon)\cdot t^{\alpha},(R_{\infty}+\epsilon)\cdot t^{\alpha}\Big]\times U_{\epsilon}.\]
    \end{enumerate}
\end{thm}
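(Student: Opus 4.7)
The plan is to combine the asymptotic expansion for $u(r,x,t)$ on $\{r\leq Rt^{1/2}\}$ from Corollary \ref{wasympt} and Proposition \ref{normaltail} with the Gaussian tail bound of Corollary \ref{smallatinfty} outside this region. Writing $\beta := n/2-\gamma_2$ and noting $\gamma_1 - n/2+1=0$, summing over $k\leq K-1$ and bounding the tail gives, uniformly for $r\leq Rt^{1/2}$,
\begin{equation*}
u(r,x,t) = G_1 t^{-n/2} e^{-r^2/(4t)}\bigl(1+O(t^{-1})\bigr) + J(x)\, r^{1-\beta} t^{-(\gamma_2+1)} e^{-r^2/(4t)}\bigl(1+O(t^{-1})\bigr) + O\bigl(r^{\gamma_K-n/2+1}t^{-(\gamma_K+1)}\bigr),
\end{equation*}
so in particular $u(0,t)=G_1 t^{-n/2}(1+O(t^{-1}))$ since the second and third summands vanish at $r=0$.

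For Case 1 ($\beta\leq -1$), I would use Lemma \ref{wderivative} with $\gamma=\gamma_1$, which yields $v_1\partial_r w_{\gamma_1}(r,t) = -\tfrac{r}{2t}G_1 t^{-n/2}e^{-r^2/(4t)}(1+O(t^{-1}))$, and show this piece dominates $\partial_r u$. The $k\geq 2$ contributions carry a relative factor bounded by $r^{-\beta-1}t^\beta\leq R^{-\beta-1}t^{(\beta-1)/2}\to 0$ on $(0,Rt^{1/2}]$ (using $-\beta-1\geq 0$), and the tail derivative from Proposition \ref{tailderivative} is similarly negligible since $\gamma_K > n/2+1$. Hence $\partial_r u<0$ strictly on $(0,Rt^{1/2}]$ for $t$ large, forcing the maximum there to be attained uniquely at $r=0$; Corollary \ref{smallatinfty} with threshold $\epsilon_0<G_1/2$ rules out $r\geq Rt^{1/2}$, giving $H(t)=\{p\}$.

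For Case 2 ($\beta>-1$ and $J\not\equiv 0$), orthogonality of the $v_k$ to $v_1$ gives $\int_M J\,dx=0$, hence $m>0$. Setting $\alpha = \beta/(\beta+1)<1/2$, $\tau := t^{2\alpha-1}\to 0$, and substituting $r=\rho t^\alpha$, I would Taylor-expand $e^{-r^2/(4t)}=1-\rho^2\tau/4+O(\rho^4\tau^2)$ and subtract $u(0,t)$---which cancels the multiplicative $O(t^{-1})$ error common to both sides---to obtain, uniformly for $\rho$ in a bounded set,
\begin{equation*}
u(\rho t^\alpha,x,t) - u(0,t) = t^{-n/2}\tau\bigl[f(\rho,x) + o(1)\bigr],\qquad f(\rho,x) := J(x)\rho^{1-\beta} - \tfrac{G_1}{4}\rho^2.
\end{equation*}
A direct calculus computation shows $f$ attains its global maximum on $(0,\infty)\times M$ at $\rho = R_\infty$ with $x\in\{J=m\}$, with value $f_{\max}= G_1 R_\infty^2(1+\beta)/[4(1-\beta)]>0$ (indeed $\partial_\rho f=0$ gives $\rho^{*}(x) = [2(1-\beta)J(x)/G_1]^{1/(1+\beta)}$ and $\partial_\rho^2 f|_{\rho^*} = -G_1(\beta+1)/2<0$; on $\{J\leq 0\}$ one has $f\leq 0<f_{\max}$). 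For any $\epsilon>0$, continuity of $f$ then produces $\delta>0$ with $\{(\rho,x):f\geq f_{\max}-\delta\}\subseteq [R_\infty-\epsilon,R_\infty+\epsilon]\times U_\epsilon$, and the uniform expansion forces $H(t)$ into this set. Small $\rho$ is excluded since $f(\rho,x)\to 0$; large $\rho$ within $\{r\leq Rt^{1/2}\}$ is excluded since $f\to -\infty$ (and the Gaussian $e^{-\rho^2\tau/4}$ kills both leading terms once $\rho^2\tau$ grows); and $r\geq Rt^{1/2}$ is excluded by Corollary \ref{smallatinfty} after noting $u(R_\infty t^\alpha,x^*,t)\sim G_1 t^{-n/2}$.

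The main technical obstacle is confirming that every error term in the rescaled expansion is $o(t^{-n/2}\tau)$, one scale finer than what the individual corollaries guarantee. The delicate check is the tail: for $r=\rho t^\alpha$ with $\rho$ bounded, the required decay of $U_K$ reduces to $(1-\alpha)(n/2-\gamma_K)-\alpha<0$, which is automatic from $\gamma_K>\gamma_2=n/2-\beta$. The $O(\rho^4\tau^2)$ remainder from the Gaussian expansion is of order $o(t^{-n/2}\tau)$ for $\rho$ bounded, and the $O(t^{-1})$ multiplicative error in the leading term of Corollary \ref{wasympt} drops out by the cancellation with $u(0,t)$ noted above, so the strict maximality of $f$ at $(R_\infty,x^*)$ persists and pins down $H(t)$ at the announced scale.
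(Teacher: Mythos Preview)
Your Case 1 tracks the paper's: both establish $\partial_r u<0$ on $(0,Rt^{1/2}]$ and then invoke Corollary \ref{smallatinfty}. Your Case 2 is genuinely different. The paper stays with $\partial_r u$: from expansion \eqref{fullexp} it shows, after first shrinking $R$ via Corollary \ref{Htrapped1} and restricting to $x\in U_\epsilon$ via Lemma \ref{Htrapped2}, that $\partial_r u>0$ on $(0,(R_\infty-\epsilon)t^\alpha]$ and $\partial_r u<0$ on $[(R_\infty+\epsilon)t^\alpha, Rt^{1/2}]$. You instead subtract $u(0,t)$, rescale $r=\rho t^\alpha$, and extract a limiting profile $f(\rho,x)$.

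There are two real gaps. First, the cancellation: Corollary \ref{wasympt} only asserts $w_{\gamma_1}(r,t)=G_1 t^{-n/2}e^{-r^2/(4t)}(1+O(t^{-1}))$ with an error that is allowed to depend on $r$; nothing there says it is ``common to both sides.'' In the subrange $\nu_2\ge n-1$ (i.e.\ $\beta\le 0$) one has $\tau=t^{2\alpha-1}\le t^{-1}$, so the uncancelled residue $G_1 t^{-n/2}\cdot O(t^{-1})$ after subtraction is at least as large as the scale $t^{-n/2}\tau$ you are trying to resolve. This is fixable by reopening Lemma \ref{pgammaasymptotic} and observing that the $r$-dependent piece of the error is in fact $O(r^2/t^2)$ (from the Bessel remainder) while the $e^{-s^2/(4t)}$ contribution is $r$-independent and does cancel --- but that is a further argument you have not supplied. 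The paper sidesteps the whole issue because in $\partial_r u$ the two competing summands already balance at scale $t^\alpha$, and every $O(t^{-1})$ appearing in \eqref{fullexp} is purely relative.

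Second, the intermediate region. Your rescaled expansion holds only uniformly for $\rho$ in bounded sets, so ``$f\to -\infty$'' controls only compacts in $\rho$, and the Gaussian remark handles only $\rho^2\tau\gtrsim 1$. The sliver where $\rho\to\infty$ yet $\rho^2\tau\to 0$ (for instance $\rho\sim\tau^{-1/4}$, which still satisfies $r\le Rt^{1/2}$ since $\alpha<1/2$) is untreated. The paper disposes of this entire range in one stroke via the sign of $\partial_r u$ on $[(R_\infty+\epsilon)t^\alpha, Rt^{1/2}]$; a pure value-comparison proof would need an additional argument here.
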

By unraveling the definitions and notation, Theorem \ref{mainthmcones} follows immediately from Theorem \ref{moreprecise}.

\indent To prove Theorem \ref{moreprecise}, we first show that $H(t)$ does not tend to infinity more quickly than order $t^{1/2}$.
\begin{lem}\label{specificR}
     There exist $R,T>0$ such that for all $t\geq T$, \[H(t)\subseteq [0,R\cdot t^{1/2}]\times M.\]
\end{lem}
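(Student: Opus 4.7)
The strategy is to compare a strictly positive lower bound of order $t^{-n/2}$ on the value of $u$ at a fixed interior reference point against the decay rate in Corollary \ref{smallatinfty} on $\{r \geq R \cdot t^{1/2}\}$; once the lower bound beats the decay bound, $H(t)$ cannot meet the outer region.

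First, I isolate the $k=1$ contribution to the expansion (\ref{fullseries}). Since $\gamma_1 = (n-2)/2$ satisfies $\gamma_1 - n/2 + 1 = 0$ and $\gamma_1 + 1 = n/2$, Corollary \ref{wasympt} applied at the fixed point $r_0 = 1$ gives
\begin{equation*}
    w_{\gamma_1}(1, t) \cdot v_1 = \frac{v_1 \cdot \Psi_{\gamma_1}(\phi)}{2^{n-1} \cdot \Gamma(n/2)} \cdot t^{-n/2} \cdot (1 + O(t^{-1})).
\end{equation*}
The leading coefficient is strictly positive: $v_1 = \mathrm{Vol}(M,h)^{-1/2}$ is a positive constant, while the exponent $\gamma_1 - n/2 + 1 = 0$ reduces $\Psi_{\gamma_1}(\phi)$ to $v_1 \cdot \int_{C(M)} \phi \, dV_g$, which is positive by Hypothesis \ref{phihyp}.

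Next, I bound the tail $U_2(1, x, t) = \sum_{k \geq 2} w_{\gamma_k}(1, t) v_k(x)$ uniformly in $x$. Proposition \ref{normaltail} applied with $K = 2$ at $r_0 = 1$ yields $|U_2(1, x, t)| \leq C_1 \cdot t^{-(\gamma_2 + 1)}$, and since $\nu_2 > 0$ forces $\gamma_2 > \gamma_1$, this term decays strictly faster than $t^{-n/2}$. Combining the two estimates produces $c_0 > 0$ and $T_1 > 0$ such that $u(1, x, t) \geq c_0 \cdot t^{-n/2}$ for every $x \in M$ and $t \geq T_1$; in particular, $\max_{C(M)} u(\cdot, t) \geq c_0 \cdot t^{-n/2}$ for such $t$.

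Finally, I invoke Corollary \ref{smallatinfty} with $\epsilon = c_0 / 2$ to produce $R, T_2 > 0$ such that $|u(r, x, t)| \leq (c_0/2) \cdot t^{-n/2}$ whenever $r \geq R \cdot t^{1/2}$ and $t \geq T_2$. Setting $T = \max(T_1, T_2)$, no point with $r \geq R \cdot t^{1/2}$ can achieve the maximum for $t \geq T$, which is exactly the claim. The one genuine obstacle is showing the leading $t^{-n/2}$ coefficient is nonzero, and this is precisely what the strict positivity $\int_{C(M)} \phi \, dV_g > 0$ in Hypothesis \ref{phihyp} was designed to guarantee; the rest of the argument is a packaging of results already proved in Sections \ref{heatkernelsection}--\ref{tailsection}.
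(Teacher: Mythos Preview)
Your argument is correct and follows essentially the same approach as the paper: produce a positive lower bound of order $t^{-n/2}$ on the maximum of $u(\cdot,t)$ from the $k=1$ term (using Hypothesis~\ref{phihyp} to ensure $\Psi_{\gamma_1}(\phi)>0$), then invoke Corollary~\ref{smallatinfty} to exclude large $r$. The only cosmetic difference is that the paper evaluates at the cone point $p$ (where the tail $U_2$ vanishes outright because $r^{\gamma_2-n/2+1}\to 0$), whereas you evaluate at the fixed radius $r_0=1$, where the tail is merely of lower order $t^{-(\gamma_2+1)}$; both choices yield the needed $c_0\,t^{-n/2}$ lower bound.
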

\begin{proof}
    Note that $\gamma_1=\frac{n-2}{2}$, so Hypothesis \ref{phihyp} implies that $\Psi_{\gamma_1}(\phi)>0$, and $\gamma_k-n/2+1>0$ if and only if $k\geq 2$. Thus, by the expansion (\ref{fullseries}), Corollary \ref{wasympt}, and Proposition \ref{normaltail}, we have 
    \[u(p,t)=G_1\cdot t^{-n/2}\cdot\big(1+O(t^{-1})\big).\]
    The result then follows from using the $R,T>0$ provided by Corollary \ref{smallatinfty} with $\epsilon=G_1/2$.
\end{proof}

\indent We now combine the results of the preceding two sections to find an asymptotic expansion for $\partial_ru$ on sets of the form $\{r\leq R\cdot t^{1/2}\}$. Let $R>0$ be any positive real number. By Lemma \ref{wderivative} and Proposition \ref{tailderivative}, there exists $T>0$ such that for any $t\geq T$ and $r\leq R\cdot t^{1/2}$, 
\begin{alignat}{2}\label{fullexp}
    \partial_ru&=\sum_{k=1}^{\infty}(\partial_r&&w_{\gamma_k}(r,t))\cdot v_k(x)\nonumber\\
    &=e^{-\frac{r^2}{4t}}\cdot&&\Bigg[-\frac12G_1\cdot r\cdot t^{-\frac{n+2}{2}}\cdot \big(1+O(t^{-1})\big)\\& &&+\Big(\gamma_2-\frac{n-2}{2}-\frac{r^2}{2t}+r^2\cdot O(t^{-2})\Big)\cdot J(x)\cdot r^{\gamma_2-n/2}\cdot t^{-(\gamma_2+1)}\cdot\big(1+O(t^{-1})\big)\nonumber\\& && r^{\gamma_2-n/2}\cdot t^{-(\gamma_2+1)}\cdot O\Big(t^{-\frac{\gamma_K-\gamma_2}{2}}\Big)\Bigg].\nonumber
\end{alignat}
Using this expansion, we can further constrain the long-time location of $H(t)$.
\begin{lem}\label{anyR}
    Let $0<R_1<R_2$. Then there exists $T>0$ such that for all $t\geq T$, the radial derivative $\partial_r u$ is strictly negative on $\big[R_1\cdot t^{1/2},R_2\cdot t^{1/2}\big]\times M$. 
\end{lem}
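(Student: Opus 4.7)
The plan is to rescale $r = \rho\sqrt{t}$ with $\rho \in [R_1, R_2]$ and apply the asymptotic expansion (\ref{fullexp}), extracting the dominant term of the bracket. Under this substitution the Gaussian prefactor $e^{-r^2/(4t)}$ becomes $e^{-\rho^2/4}$, which is bounded above and below by positive constants on $[R_1, R_2]$; hence the sign of $\partial_r u$ matches the sign of the bracket in (\ref{fullexp}).

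Next I would compute the $t$-scaling of each of the three terms in the bracket after substituting $r = \rho\sqrt{t}$. The first term becomes $-\tfrac{1}{2}G_1\,\rho\,t^{-(n+1)/2}(1 + O(t^{-1}))$. Since $\gamma_1 = (n-2)/2$ gives $\gamma_1 - n/2 + 1 = 0$ and $v_1 \equiv \text{Vol}(M,h)^{-1/2}$, Hypothesis \ref{phihyp} forces $\Psi_{\gamma_1}(\phi) > 0$, so the constant $G_1$ is strictly positive. Thus the first term is bounded above by $-c_1 \,t^{-(n+1)/2}$ for some uniform $c_1 > 0$ depending only on $R_1$ and $G_1$. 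Using $r^2/(2t) = \rho^2/2$ and $r^{\gamma_2 - n/2} = \rho^{\gamma_2 - n/2} t^{(\gamma_2 - n/2)/2}$, the second term takes the form $c(\rho)\cdot J(x)\cdot t^{-(\gamma_2/2 + n/4 + 1)}(1 + O(t^{-1}))$ with $c(\rho)$ uniformly bounded on $[R_1, R_2]$ and $|J(x)| \leq \max_M |J|$, while the tail scales like $t^{-(\gamma_2/2 + n/4 + 1) - (\gamma_K - \gamma_2)/2}$.

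The decisive observation is that $\gamma_2 > \gamma_1 = (n-2)/2$ (equivalently, $\nu_2 > 0$), which yields
\[
\left(\tfrac{\gamma_2}{2} + \tfrac{n}{4} + 1\right) - \tfrac{n+1}{2} \;=\; \tfrac{2\gamma_2 - (n-2)}{4} \;>\; 0.
\]
Thus both the second term and the tail decay strictly faster than $t^{-(n+1)/2}$. Choosing $T$ so large that, for $t \geq T$, the absolute values of the second and third contributions sum to at most $\tfrac{c_1}{2} t^{-(n+1)/2}$ uniformly over $(\rho, x) \in [R_1, R_2] \times M$, the bracket is bounded above by $-\tfrac{c_1}{2}t^{-(n+1)/2}$ and hence strictly negative throughout the annulus, proving the lemma.

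The only genuine difficulty here is careful bookkeeping of exponents and uniformity in $\rho$ and $x$; there is no deeper obstacle. Conceptually, the lemma says that on the diffusive scale $r \asymp \sqrt{t}$, the radial zero-mode's Gaussian spreading produces a negative drift of order $t^{-(n+1)/2}$ that dominates the contribution from every higher angular mode, because each such mode in Corollary \ref{wasympt} carries an additional factor $r^{\gamma_k - n/2 + 1}\sim t^{(\gamma_k - n/2 + 1)/2}$ that is strictly subleading once $\nu_k \geq \nu_2 > 0$.
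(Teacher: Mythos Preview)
Your proof is correct and follows essentially the same approach as the paper's: both use the expansion (\ref{fullexp}), note that on the annulus $r\in[R_1\sqrt{t},R_2\sqrt{t}]$ the first (radial zero-mode) term is a strictly negative multiple of $t^{-(n+1)/2}$, and then observe that the remaining terms scale like $t^{-(\gamma_2/2+n/4+1)}$, which is of strictly smaller order since $\gamma_2>(n-2)/2$. Your explicit substitution $r=\rho\sqrt{t}$ and your separate verification that $G_1>0$ (which the paper handles in the proof of Lemma \ref{specificR}) make the bookkeeping slightly more transparent, but the argument is the same.
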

\begin{proof}
    On this set, the quotient $r^2/(2t)$ is uniformly bounded. By the expansion (\ref{fullexp}), therefore, there exist constants $C_1,C_2>0$ such that for $t$ sufficiently large, \[\partial_ru\leq e^{-\frac{r^2}{4t}}\Big[-C_1\cdot t^{-\frac{n+1}{2}}+C_2\cdot t^{-\gamma_2/2-n/4-1}\Big].\] Since $\gamma_2>\frac{n-2}{2}$, we have \[-\frac{\gamma_2}{2}-\frac{n}{4}-1<-\frac{n+1}{2},\] so we get $\partial_r u<0$ for sufficiently large $t$. 
\end{proof}
Combining Lemma \ref{specificR} with Lemma \ref{anyR} immediately gives
\begin{coro}\label{Htrapped1}
    For any $R>0$, there exists $T>0$ such that for any $t\geq T$, \[H(t)\subseteq \big[0,R\cdot t^{1/2}\big]\times M.\]
\end{coro}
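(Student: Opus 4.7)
The plan is to observe that this corollary is a direct combination of the two preceding lemmas, with no further analytic work required. First I would invoke Lemma \ref{specificR} to fix some $R_0>0$ and $T_0>0$ such that $H(t)\subseteq[0,R_0\cdot t^{1/2}]\times M$ for all $t\geq T_0$. For the given target $R>0$, the case $R\geq R_0$ is then immediate, so I would reduce to the case $0<R<R_0$.

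In that case, I would apply Lemma \ref{anyR} with $R_1=R$ and $R_2=R_0$ to obtain $T_1\geq T_0$ such that for all $t\geq T_1$, the radial derivative $\partial_r u$ is strictly negative throughout the strip $[R\cdot t^{1/2},R_0\cdot t^{1/2}]\times M$. Consequently, for each fixed $x\in M$ and each $r\in(R\cdot t^{1/2},R_0\cdot t^{1/2}]$, one has
\begin{equation*}
u(r,x,t)<u(R\cdot t^{1/2},x,t)\leq \max_{(s,y)}u(s,y,t),
\end{equation*}
so no point of the strip $(R\cdot t^{1/2},R_0\cdot t^{1/2}]\times M$ can belong to $H(t)$. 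Combined with the containment from Lemma \ref{specificR}, this forces $H(t)\subseteq[0,R\cdot t^{1/2}]\times M$ for all $t\geq T_1$, which is the claim with $T=T_1$.

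There is no genuine obstacle here; the only thing to verify is the logical step that strict monotonicity of $u(\cdot,x,t)$ in $r$ on a strip prevents hot spots from lying in the interior (or outer boundary) of that strip, which follows because any hot spot of $u(\cdot,\cdot,t)$ is in particular a maximum of the restriction $u(\cdot,x,t)$ for its own $x$. So the entire proof is a two-line combination of Lemmas \ref{specificR} and \ref{anyR}, exactly as the author indicates with the phrase ``immediately gives.''
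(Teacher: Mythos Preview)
Your proposal is correct and matches the paper's own argument exactly: the author simply writes that the corollary follows immediately from combining Lemma~\ref{specificR} with Lemma~\ref{anyR}, and your write-up spells out precisely this two-step reduction (fix $R_0$ from Lemma~\ref{specificR}, then use Lemma~\ref{anyR} on $[R\cdot t^{1/2},R_0\cdot t^{1/2}]$ to rule out hot spots there).
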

We next restrict the eventual location of $H(t)$ within each fiber. Let $U_{\epsilon}$ be as in (\ref{ueps}).
\begin{lem}\label{Htrapped2}
    For any $\epsilon>0$ and $R>0$, there exists $T>0$ such that for all $t\geq T$, \[H(t)\subseteq \big[0,R\cdot t^{1/2}\big]\times U_{\epsilon}.\]
\end{lem}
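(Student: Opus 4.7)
The plan is to combine Corollary \ref{Htrapped1} with the full asymptotic expansion of $u$ on $\{r \leq R\cdot t^{1/2}\}$. Corollary \ref{Htrapped1} (applied with the given $R$) already shrinks $H(t)$ into $[0, R\cdot t^{1/2}]\times M$ for $t$ large, so the only task remaining is to constrain the $x$-coordinate.

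First, I would assemble the leading-order expansion of $u$. Combining Corollary \ref{wasympt} applied to each of $\gamma_1,\gamma_2,\ldots,\gamma_{K-1}$ (noting $\gamma_1=(n-2)/2$ makes the $k=1$ contribution $x$-independent and scale-free in $r$, and that $\gamma_2=\cdots=\gamma_{K-1}$ collapses the remaining finitely many terms into a single $J(x)$-multiple) with Proposition \ref{normaltail} to bound the tail, I get, uniformly on $\{r \leq R\cdot t^{1/2}\}\times M$,
\begin{align*}
u(r,x,t) &= G_1\cdot t^{-n/2}\cdot e^{-r^2/(4t)}\cdot (1+O(t^{-1})) \\
&\quad + J(x)\cdot r^{\gamma_2 - n/2+1}\cdot t^{-(\gamma_2+1)}\cdot e^{-r^2/(4t)}\cdot (1+O(t^{-1})) \\
&\quad + O\!\left(r^{\gamma_K - n/2+1}\cdot t^{-(\gamma_K+1)}\right).
\end{align*}

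Second, I would argue by contradiction. Suppose there is a sequence $t_j\to\infty$ and points $(r_j,x_j)\in H(t_j)$ with $x_j \notin U_\epsilon$, so $J(x_j) < m - \epsilon$. Pick a maximizer $x_m\in M$ of $J$. Since $(r_j,x_j)$ is a maximum, $u(r_j,x_j,t_j)\geq u(r_j,x_m,t_j)$. The $G_1$-term and the $e^{-r^2/(4t)}$ prefactor cancel in the difference, so after dividing by the strictly positive quantity $r_j^{\gamma_2-n/2+1}\,t_j^{-(\gamma_2+1)}\,e^{-r_j^2/(4t_j)}$ (valid whenever $r_j>0$), the inequality $u(r_j,x_m,t_j)-u(r_j,x_j,t_j)\leq 0$ becomes
\[
(J(x_m)-J(x_j))\,(1+O(t_j^{-1})) + O\!\left(e^{r_j^2/(4t_j)}\cdot r_j^{\gamma_K-\gamma_2}\cdot t_j^{-(\gamma_K-\gamma_2)}\right) \leq 0.
\]
Because $r_j\leq R\cdot t_j^{1/2}$, the exponential is bounded by $e^{R^2/4}$ and $r_j^{\gamma_K-\gamma_2}t_j^{-(\gamma_K-\gamma_2)}\leq R^{\gamma_K-\gamma_2}\,t_j^{-(\gamma_K-\gamma_2)/2}\to 0$, since $\gamma_K>\gamma_2$. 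For $t_j$ large the left-hand side is therefore at least $\epsilon/2>0$, a contradiction. The boundary case $r_j=0$ is vacuous, as the cone point lies in $[0,R\cdot t^{1/2}]\times U_\epsilon$ under the identification $\{0\}\times M \sim p$.

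The only mildly subtle point is that the asymptotic expansion must be uniform in $r$ down to $r=0$; this is already built into the statements of Corollary \ref{wasympt} and Proposition \ref{normaltail} (which hold on the full set $\{r\leq R\cdot t^{1/2}\}$), so no additional work is needed. The essential algebraic input is simply the scaling gap: the $x$-dependent $J$-term scales as $r^{\gamma_2-n/2+1}\,t^{-(\gamma_2+1)}$ while the tail scales as $r^{\gamma_K-n/2+1}\,t^{-(\gamma_K+1)}$, and the constraint $r\leq R\cdot t^{1/2}$ makes the ratio $O(t^{-(\gamma_K-\gamma_2)/2})$, which is decisively smaller than the fixed gap $\epsilon$ for $t$ large.
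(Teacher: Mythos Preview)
Your proof is correct and follows essentially the same approach as the paper. The paper's proof simply writes down the expansion of $u$ on $\{r\leq R\cdot t^{1/2}\}$ (the same one you assemble from Corollary~\ref{wasympt} and Proposition~\ref{normaltail}) and declares that the result ``follows directly from this expansion and Corollary~\ref{Htrapped1}''; your contradiction argument comparing $u(r_j,x_j,t_j)$ with $u(r_j,x_m,t_j)$ is precisely the natural way to spell out that direct implication, and your handling of the tail ratio $O(t^{-(\gamma_K-\gamma_2)/2})$ and of the cone-point case $r_j=0$ is clean.
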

\begin{proof}
    Similarly to the computation leading to the expansion (\ref{fullexp}), for $r\leq R\cdot t^{1/2}$ and sufficiently large $t$, we have 
    \begin{align*}
        u=e^{-\frac{r^2}{4t}}\cdot \Big[&G_1\cdot t^{-n/2}\cdot \big(1+O(t^{-1})\big)+J(x)\cdot r^{\gamma_2-n/2+1}\cdot t^{-(\gamma_2+1)}\cdot\big(1+O(t^{-1})\big)\\&r^{\gamma_2-n/2}\cdot t^{-(\gamma_2+1)}\cdot O(t^{-\delta})\Big]
    \end{align*}
    for some $\delta>0$. The result follows directly from this expansion and Corollary \ref{Htrapped1}. 
\end{proof}

We are now set to prove Theorem \ref{moreprecise}. 
\begin{proof}[Proof of Theorem \ref{moreprecise}]
    Let $R>0$. If $\nu_2\geq 2n$, then $\gamma_2\geq \frac{n}{2}+1$. The expansion (\ref{fullexp}) implies that there exist constants $C_1,C_2>0$ such that for $t$ sufficiently large and $r\leq R\cdot t^{1/2}$, 
    \begin{align*}
        \partial_ru&\leq e^{-\frac{r^2}{4t}}\cdot\Big[-C_1\cdot r\cdot t^{-\frac{n+2}{2}}+C_2\cdot r^{\gamma_2-n/2}\cdot t^{-(\gamma_2+1)}\Big]\\
        &=e^{-\frac{r^2}{4t}}\cdot r\cdot t^{-\frac{n+2}{2}}\cdot\Big[-C_1+C_2\cdot r^{\gamma_2-n/2-1}\cdot t^{n/2-\gamma_2}\Big].
    \end{align*}
    Since $\gamma_2-\frac{n}{2}-1\geq 0$, if $0<r\leq R\cdot t^{1/2}$, then we get 
    \[\partial_ru\leq e^{-\frac{r^2}{4t}}\cdot r\cdot t^{-\frac{n+2}{2}}\cdot \Big[-C_1+C_2\cdot R\cdot t^{n/4-\gamma_2/2-1/2}\Big].\] Since the exponent on $t$ in this last expression is negative, we get $\partial_ru<0$ for all $0<r<R\cdot t^{1/2}$ and sufficiently large $t$, and we obtain the first case of the theorem by Corollary \ref{Htrapped1}.\\
    \indent Now suppose that $\nu_2<2n$, so $\gamma_2<\frac{n}{2}+1$. Let $\epsilon>0$. Let $\delta_1,\delta_2,\delta_3>0$ be small (to be specified later). By the expansion (\ref{fullexp}), we have the following for $t$ sufficiently large, $R>0$ sufficiently small, $r\leq R\cdot t^{1/2}$, and $x\in U_{\delta_2}$:
    \begin{alignat*}{2}
        \partial_ru&\geq e^{-\frac{r^2}{4t}}\cdot &&\Big[-\frac12G_1\cdot r\cdot t^{-\frac{n+2}{2}}\cdot (1+\delta_1)\\& &&+\Big(\gamma_2-\frac{n-2}{2}-\delta_2\Big)\cdot (m-\delta_2)\cdot (1-\delta_2)\cdot r^{\gamma_2-n/2}\cdot t^{-(\gamma_2+1)}\\& &&-\delta_3\cdot r^{\gamma_2-n/2}\cdot t^{-(\gamma_2+1)}\Big]\\
        &=e^{-\frac{r^2}{4t}}\cdot &&\;r^{\gamma_2-n/2} \cdot \Big[-\frac12 G_1\cdot r^{n/2-\gamma_2+1}\cdot t^{-\frac{n+2}{2}}\cdot (1+\delta_1)\\& &&+\bigg(\Big(\gamma_2-\frac{n-2}{2}-\delta_2\Big)\cdot(m-\delta_2)\cdot(1-\delta_2)-\delta_3\bigg)\cdot t^{-(\gamma_2+1)}\Big].
    \end{alignat*}
    Now supposing further that $r\leq (R_{\infty}-\epsilon)\cdot t^{\alpha}$ gives 
    \begin{align*}
        \partial_r u\geq e^{-\frac{r^2}{4t}}\cdot r^{\gamma_2-n/2}\cdot t^{-(\gamma_2+1)}\cdot\Big[&-\frac12 G_1\cdot \big(R_{\infty}-\epsilon)^{n/2-\gamma_2+1}\cdot (1+\delta_1)\\&+\Big(\gamma_2-\frac{n-2}{2}-\delta_2\Big)\cdot (m-\delta_2)\cdot (1-\delta_2)-\delta_3\Big].
    \end{align*}
    By the definition of $R_{\infty}$, taking $\epsilon$, $\delta_1$, $\delta_2$, and $\delta_3$ equal to $0$ makes this last term equal to $0$. By continuity, we see that choosing $\delta_1$ then $\delta_2$ then $\delta_3$ each to be sufficiently small with respect to $\epsilon$ and to each other gives $\partial_ru>0$ on $r\leq (R_{\infty}-\epsilon)\cdot t^{\alpha}$ with $t$ sufficiently large. A similar computation gives the opposite inequality on $(R_{\infty}+\epsilon)\cdot t^{\alpha}\leq r\leq R\cdot t^{1/2}$. If the chosen $\delta_2$ exceeds $\epsilon$, then replace it by $\epsilon$ to additionally ensure that $x\in U_{\epsilon}$ for each $(r,x)\in H(t)$ for sufficiently large $t$. By Lemma \ref{Htrapped2}, the theorem follows. 
\end{proof}

%%%%%%%%%%%%%%%%%%%%%%%%%%%%%%%%%%%%%%%%%%%%%%%%%

\end{document}